\tikzset{every node/.style={align=center}}
\newcommand\cC{\mathcal{C}}
\newcommand\cD{\mathcal{D}}
\newcommand\cF{\mathcal{F}}
\newcommand\cL{\mathcal{L}}
\newcommand\cM{\mathcal{M}}
\newcommand\cN{\mathcal{N}}
\newcommand\cO{\mathcal{O}}
\newcommand\cS{\mathcal{S}}
\newcommand\cT{\mathcal{T}}
\newcommand\bE{\mathbb{E}}
\newcommand\bF{\mathbb{F}}
\newcommand\bP{\mathbb{P}}
\newcommand\bR{\mathbb{R}}
\def \ud{\mathrm{d}}
\def \e{\mathrm{e}}
\newcommand{\eps}{\varepsilon}
\newcommand{\ind}{\mathbbm{1}}
\newtheorem{theorem}{Theorem}[section]
\newtheorem{lemma}[theorem]{Lemma}
\newtheorem{corollary}[theorem]{Corollary}
\newtheorem{proposition}[theorem]{Proposition}
\newtheorem{remark}[theorem]{Remark}
\newtheorem{assumptions}[theorem]{Assumption}
\title[A result for time-inhomogeneous optimal stopping problems]{On the monotonicity of the stopping boundary \\ for time-inhomogeneous optimal stopping problems}
\author[A.\ Milazzo]{Alessandro Milazzo}
\subjclass[2020]{60G07, 60G40, 60J60, 49N30, 35R35}
\keywords{optimal stopping, monotone stopping boundary, time-inhomogeneous diffusions, partial information}
\address{A.\ Milazzo: Department of Mathematics, Uppsala University, Box 480, 75106, Uppsala, SWEDEN.}
\email{\href{mailto:alessandro.milazzo@math.uu.se}{alessandro.milazzo@math.uu.se}}
\numberwithin{equation}{section}
\begin{document}
	
\begin{abstract}
We consider a class of time-inhomogeneous optimal stopping problems and we provide sufficient conditions on the data of the problem that guarantee monotonicity of the optimal stopping boundary. In our setting, time-inhomogeneity stems not only from the reward function but, in particular, from the time dependence of the drift coefficient of the one-dimensional stochastic differential equation (SDE) which drives the stopping problem. In order to obtain our results, we mostly employ probabilistic arguments: we use a comparison principle between solutions of the SDE computed at different starting times, and martingale methods of optimal stopping theory. We also show a variant of the main theorem, which weakens one of the assumptions and additionally relies on the renowned connection between optimal stopping and free-boundary problems.
\end{abstract}
	
\maketitle

\section{introdution}

In this paper we consider a general class of time-inhomogeneous optimal stopping problems and we provide simple sufficient conditions on the data of the problem that guarantee monotonicity of the optimal stopping boundary. The novelty of our work is to prove this result when the underlying process is time-inhomogeneous. In our setting, the underlying process is the unique strong solution of a one-dimensional stochastic differential equation (SDE) whose drift coefficient may be time-dependent. We first show how to obtain monotonicity of the optimal stopping boundary when the reward function is time-homogeneous and then we extend the result to the case of a time-dependent reward function, when it is sufficiently regular to apply Ito's formula. We focus our attention on finite-horizon optimal stopping problems but our methods clearly apply also to infinite-horizon optimal stopping problems, as the latter do not carry an additional time-dependence in the domain of the admissible stopping times.

The behaviour of the optimal stopping boundary $t\mapsto b(t)$ is crucial in order to fully characterise an optimal stopping problem. In particular, continuity and monotonicity of the map $t\mapsto b(t)$ are two desirable properties. However, this regularity is usually studied on a case-by-case basis and the number of works that provide sufficient conditions to obtain these results in a general framework is limited. Classical tricks to show continuity of the stopping boundary are presented in \cite{peskir2006optimal} in various examples, whereas results in a general setting can be found in \cite{de2015note} (for one-dimensional diffusions) and \cite{peskir2019continuity} (for two-dimensional diffusions). Determining monotonicity of $t\mapsto b(t)$ can be even a more relevant turning point. First, it is a helpful result in order to obtain its continuity (as shown, e.g., in \cite{de2015note}). Furthermore, when the underlying process is strong Markov, it implies that the optimal stopping time $\tau^*_{t,x}$ is a continuous function of the starting point $(t,x)$ across the boundary\footnote{Here, we mean that if $(t,x)=(t,b(t))$ and $(t_n,x_n)\to(t,x)$ as $n\to\infty$, then $\tau^*_{t_n,x_n}\to\tau^*_{t,x}$ as $n\to\infty$, $\bP$-a.s.} or, equivalently, that the boundary is regular for the interior of the stopping set in the sense of diffusions (a concept extensively illustrated in \cite{de2020global}). This yields global $C^1$-regularity of the value function, which is also a helpful result to characterise the stopping boundary (when continuous) as the unique continuous solution of a family of integral equations. An extensive probabilistic analysis of the geometry of a general class of optimal stopping problems, including continuity and monotonicity of the stopping boundary, is presented in \cite{de2022stopping} when the underlying diffusion and reward function are time-homogeneous. The shape of the continuation region is also studied under a general framework in \cite{jacka1992finite}. However, their result on the monotonicity of $t\mapsto b(t)$ (see Proposition 4.4 therein) holds only for time-homogeneous diffusions. One contribution of this paper is to extend this result to a class of time-inhomogeneous diffusions. Regularity and characterisation of the value function are obtained for time-inhomogeneous Markov processes in \cite{oshima2006optimal} and in \cite{yang2014refined}, and for time-inhomogeneous Poisson processes in \cite{hobson2021shape}. To the best of our knowledge, no study of the properties of the stopping boundary has been developed in a general setting for time-inhomogeneous diffusions. It is also worth mentioning several theoretical works on the behaviour of the stopping boundary and of the value function in the context American options. We cite, among others, \cite{chen2007mathematical}, \cite{ekstrom2004convexity}, \cite{jacka1991optimal}, \cite{jaillet1990variational}, \cite{laurence2009regularity} and \cite{villeneuve1999exercise}.

In order to obtain our results, we rely on probabilistic arguments. We first present a comparison principle between solutions of the underlying SDE computed at different starting times (see Lemma \ref{lemma:CompPrinc}). Specifically, we show that if the drift coefficient $t\mapsto\mu(t,x)$ is monotone then the solutions of the SDE computed at different starting times are ordered. By means of this result and martingale methods of optimal stopping theory, we prove that if in addition a time-homogeneous reward function $x\mapsto g(x)$ is non-decreasing then $t\mapsto v(t,x)$ is also monotone for every $x\in\bR$ (see Theorem \ref{thm:v_t}). In a variant of the theorem we show that if monotonicity of $t\mapsto\mu(t,x)$ does not hold for every $x$ in the state space of the underlying process, we are able to weaken this condition and obtain the same result under a further assumption which involves the derivatives of the value function and it is implied by convexity of $x\mapsto v(t,x)$ (see Theorem \ref{thm:v_tTauB}). This proof additionally relies on the renowned connection between optimal stopping and free-boundary problems. An example of time-inhomogeneous diffusions which perfectly fits the weaker monotonicity assumption (of Theorem \ref{thm:v_tTauB}) on $t\mapsto\mu(t,x)$ is given by Brownian bridges. Several works have investigated optimal stopping problems involving Brownian bridges and we cite, among others, \cite{shepp1969}, \cite{follmer1972}, \cite{ekstrom2009optimal}, \cite{ekstrom2020optimal}, \cite{de2020optimal}, \cite{glover2020optimally} and \cite{d2020discounted}. Both Theorem \ref{thm:v_t} and Theorem \ref{thm:v_tTauB} lead to the monotonicity of the optimal stopping boundary $t\mapsto b(t)$ (see Corollary \ref{cor:Monot}). Then, we prove that monotonicity of $t\mapsto b(t)$ can be obtained even when the reward function $g$ depends on time (see Theorem \ref{thm:MonotExt}). This extension holds when $g$ is sufficiently regular to apply Ito's formula and under the additional assumption of monotonicity of $t\mapsto \cL g (t,x)$, where $\cL$ denotes the infinitesimal generator of the underlying diffusion.

Our methods are particularly suited to study optimal stopping problems under incomplete information. The common feature of these problems is a random variable whose outcome is unknown to the optimiser and which affects the drift of the underlying process and/or the reward function. The literature is vast and diverse in this field and we cite, among others, \cite{shiryaev1967two}, \cite{decamps2005investment}, \cite{ekstrom2011optimal}, \cite{ekstrom2016optimal}, \cite{ekstrom2020optimal}, \cite{ekstrom2019american}, \cite{gapeev2012pricing}, \cite{glover2020optimally} \cite{henderson2020executive}. Our results apply, in particular, to models as in \cite{ekstrom2020optimal} and \cite{glover2020optimally} where a random variable affects the drift of the underlying process and, in a Bayesian formulation of the problem, only the prior distribution of the random variable is known to the optimiser. As time evolves, the information obtained from observing the underlying process is used to update the initial beliefs about the unknown random variable. By filtering theory, the underlying process can be expressed as a time-inhomogeneous diffusion whose time-dependent drift is the conditional expectation of the unknown random variable given the observations of the process, which can be obtained through the prior distribution. This, thus, fits into our framework, as we illustrate in Section \ref{sect:OptStoppPartInfo}.

The rest of the paper is organised as follows. In Section \ref{sect:PbForm} we formulate the starting problem and we recall some standard results on optimal stopping theory. In Section \ref{sect:CompPrinc} we provide a comparison principle between solutions of the underlying SDE starting at different times, which will be later used in Section \ref{sect:MainResult} to determine the monotonicity of the optimal stopping boundary. In Section \ref{sect:Extension} we extend the range of applicability for the results of Section \ref{sect:MainResult} by considering stopping problems where also the reward functions may depend on time. Our methods are particularly suited to study a class of optimal stopping problems under partial information, which we describe in Section \ref{sect:OptStoppPartInfo}. We conclude by illustrating, in Section \ref{sect:Examples}, some simple examples of optimal stopping problems where our results apply.

\section{Starting problem and background results}\label{sect:PbForm}

Let $(\Omega,\cF,\bP)$ be a complete probability space with a filtration $\bF:=(\cF_t)_{t\geq 0}$ satisfying the usual conditions and let $W:=(W_t)_{t\geq0}$ be a standard Brownian motion which is $\bF$-adapted. Let $T\in(0,\infty)$ be a finite time horizon. In this paper we treat finite-horizon optimal stopping problems, but it will be clear that our methods apply also to the infinite-horizon analogues, where the time-dependence of the value function stems only from the drift coefficient of the underlying SDE and not from the domain of the admissible stopping times.

Given an initial condition $X_t=x\in\bR$ for $t\in[0,T)$, let $X=(X_s)_{s\geq t}$ be the time-inhomogeneous stochastic process described by
\begin{equation}\label{eq:X}
X_{t+s}=x+\int_0^s \mu(t+r,X_{t+r})\ud r+\int_0^s \sigma(X_{t+r})\ud W_r, \qquad s\in[0,T-t],
\end{equation}
where $\mu:[0,T]\times\bR\to\bR$ and $\sigma:\bR\to\bR$ are, respectively, the drift and diffusion coefficients. We assume that $x\mapsto \mu(t,x)$ is Lipschitz-continuous for every $t\in[0,T]$ and that $x\mapsto\sigma(x)$ satisfies the standard Yamada-Watanabe condition which guarantees the strong existence and uniqueness of the solution for the SDE \eqref{eq:X} (see, e.g., \cite[Theorem 40.1]{rogers2000diffusions}). Namely, we assume that there exists an increasing function $h:[0,\infty)\to[0,\infty)$ such that
$$\int_0^\eps h^{-1}(s)\ud s=\infty, \qquad \forall\: \eps>0$$
and
\begin{equation}\label{eq:Assh}
\big(\sigma(x)-\sigma(y)\big)^2\leq h(|x-y|), \qquad \forall \: x,y\in\bR.
\end{equation}
In order to keep track of the initial condition $X_t=x$, we will sometimes denote the solution $X$ of the SDE \eqref{eq:X} by $X^{t,x}$.

Given a (terminal) reward function $g:\bR\to\bR$, we define the optimal stopping problem
\begin{equation}\label{eq:v}
v(t,x):=\sup_{\tau\in\cT_t}\bE\Big[g(X^{t,x}_{t+\tau})\Big], \qquad (t,x)\in[0,T]\times\bR,
\end{equation} 
where $\cT_t$ is the class of $\bF$-stopping times $\tau$ such that $\tau\in[0,T-t]$, $\bP$-a.s. To simplify the exposition, we start by considering stopping problems of the form \eqref{eq:v}. We then extend our results to stopping problems that include both a running reward function and a terminal reward function which may also depend on time (see Section \ref{sect:Extension}).

Let $\cC$ be the continuation region and its complement $\cD:=\cC^c$ be the stopping region, respectively, defined by
\begin{equation}\label{eq:cC}
\cC:=\{(t,x)\in[0,T]\times\bR: v(t,x)>g(x) \}
\end{equation}
and
\begin{equation}
\cD:=\{(t,x)\in[0,T]\times\bR: v(t,x)=g(x) \}.
\end{equation}
We now state some mild assumptions for the optimal stopping problem \eqref{eq:v}.

\begin{assumptions}\label{ass:Standard}
	The reward function $g:\bR\to\bR$ is upper semi-continuous, the value function $v:[0,T]\times\bR\to\bR$ is continuous and we have that, for every $(t,x)\in[0,T]\times\bR$,
	\begin{equation}\label{eq:IntCond}
	\bE\bigg[\sup_{s\in[t,T]} \big|g(X^{t,x}_{s})\big|\bigg]<\infty.
	\end{equation}
\end{assumptions}

Under Assumption \ref{ass:Standard}, we obtain the next three propositions, which are standard results in optimal stopping.

\begin{proposition}\label{prop:OptiStopTime}
	Let $(t,x)\in[0,T]\times\bR$, then the stopping time
	\begin{equation}\label{eq:tau*}
	\tau^*=\tau^*_{t,x}:=\inf\{s\in[0,T-t]: (t+s,X^{t,x}_{t+s})\notin \cC\}
	\end{equation}
	is optimal for the stopping problem \eqref{eq:v}.
\end{proposition}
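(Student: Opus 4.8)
The plan is to prove optimality of $\tau^*$ through the martingale theory of optimal stopping, by identifying the process $s\mapsto v(t+s,X^{t,x}_{t+s})$ with the Snell envelope of the reward process and then invoking the standard characterisation of optimal stopping times. Fix $(t,x)\in[0,T]\times\bR$ and set, for $s\in[0,T-t]$,
\[
Y_s:=g(X^{t,x}_{t+s}),\qquad V_s:=v(t+s,X^{t,x}_{t+s}).
\]
Taking $\tau\equiv 0$ in \eqref{eq:v} gives $v\geq g$, so $V_s\geq Y_s$ for all $s$; moreover $V$ has continuous paths, since $v$ is continuous by Assumption \ref{ass:Standard} and $X^{t,x}$ has continuous paths. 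The integrability condition \eqref{eq:IntCond} ensures that $\bE[\sup_{s}|Y_s|]<\infty$ and, via a maximal inequality, that $\sup_s|V_s|$ is integrable, so all conditional expectations below are well defined and the relevant families are uniformly integrable.

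First I would establish that $(V_s)$ is an $\bF$-supermartingale. The key tool is the (strong) Markov property of the time-inhomogeneous diffusion \eqref{eq:X}: conditionally on $\cF_{t+s}$, the shifted process $(X^{t,x}_{t+s+u})_{u\geq0}$ has the law of $X^{t+s,X^{t,x}_{t+s}}$. Hence, for any $\tau\in\cT_t$ with $\tau\geq s$, the time $\tau-s$ is admissible for the problem started at $(t+s,X^{t,x}_{t+s})$, and the Markov property yields
\[
\bE\big[g(X^{t,x}_{t+\tau})\,\big|\,\cF_{t+s}\big]\leq v(t+s,X^{t,x}_{t+s})=V_s .
\]
Taking the essential supremum over such $\tau$ identifies $V_s=\esssup_{\tau\geq s}\bE[Y_\tau\mid\cF_{t+s}]$, which is exactly the Snell envelope of $(Y_s)$; in particular $V$ is the smallest supermartingale dominating $Y$. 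This is the step I expect to demand the most care: one must make the dynamic programming principle rigorous, checking the measurable pasting of stopping times and using \eqref{eq:IntCond} to justify integrability and the interchange of essential supremum and conditional expectation.

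Next I would locate the contact set $\{V_s=Y_s\}$. Since $v$ is continuous and $g$ is upper semi-continuous, the map $(t,x)\mapsto v(t,x)-g(x)$ is lower semi-continuous, so $\cD=\{v-g\leq0\}=\{v-g=0\}$ is closed and $\cC$ is relatively open. Because the paths of $X^{t,x}$ are continuous and $(T,y)\in\cD$ for every $y$ (as $\cT_T=\{0\}$ forces $v(T,\cdot)=g$), the stopping region is reached no later than $s=T-t$; thus $\tau^*$ in \eqref{eq:tau*} is a well-defined element of $\cT_t$, and $(t+\tau^*,X^{t,x}_{t+\tau^*})\in\cD$ by closedness, whence $V_{\tau^*}=Y_{\tau^*}$.

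Finally I would conclude by optional sampling. On the continuation region the Snell envelope coincides with its martingale part, so the general theory of optimal stopping gives that the stopped process $(V_{s\wedge\tau^*})$ is a uniformly integrable martingale. Combined with $V_{\tau^*}=Y_{\tau^*}$, this yields
\[
v(t,x)=V_0=\bE\big[V_{\tau^*}\big]=\bE\big[Y_{\tau^*}\big]=\bE\big[g(X^{t,x}_{t+\tau^*})\big].
\]
Since the reverse inequality $v(t,x)\geq\bE[g(X^{t,x}_{t+\tau^*})]$ holds by definition of the supremum in \eqref{eq:v}, the stopping time $\tau^*$ attains the value and is therefore optimal.
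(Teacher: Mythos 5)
Your proposal is correct and follows exactly the standard Snell-envelope/martingale argument that the paper invokes by citation (the paper's ``proof'' is simply a reference to \cite[Corollary 2.9]{peskir2006optimal}, whose content is what you reconstruct: the value process dominates the reward, is the smallest supermartingale doing so, is a martingale up to the first entry into the closed set $\cD$, and coincides with the reward there). The only point deserving the care you already flag is the rigorous dynamic programming step identifying $V_s$ with the essential supremum; granting that, the argument is complete.
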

\begin{proof}
	See, e.g., \cite[Corollary 2.9]{peskir2006optimal}.
\end{proof}

\begin{proposition}\label{prop:Mart}
	Let $(t,x)\in[0,T]\times\bR$, then the process $V:=(V_s)_{s\in[0,T-t]}$, defined by
	$$V_s=V_s^{t,x}:=v(t+s,X^{t,x}_{t+s}),$$
	is a right-continuous supermartingale and the stopped process $V^*:=(V_{s\wedge\tau^*})_{s\in[0,T-t]}$ is a right-continuous martingale.
\end{proposition}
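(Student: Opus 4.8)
The plan is to realise the process $V$ as the Snell envelope of the reward process generated by $X^{t,x}$, and then to transfer the well-known properties of Snell envelopes to $V$ by means of the Markov property of the underlying diffusion. Fix $(t,x)\in[0,T)\times\bR$ and write $X=X^{t,x}$. I set $G_s:=g(X_{t+s})$ for $s\in[0,T-t]$; by the integrability condition \eqref{eq:IntCond} the process $G$ is of class (D), while upper semi-continuity of $g$ together with the continuity of the paths $s\mapsto X_{t+s}$ makes $s\mapsto G_s$ upper semi-continuous. Denoting by $\cT_t^s$ the subclass of stopping times in $\cT_t$ that are valued in $[s,T-t]$, I introduce the Snell envelope
\[
S_s:=\esssup_{\sigma\in\cT_t^s}\bE\big[G_\sigma\mid\cF_s\big],\qquad s\in[0,T-t].
\]
By the general theory of optimal stopping (see, e.g., \cite{peskir2006optimal}), $S$ is a supermartingale, it admits a right-continuous modification which is the smallest right-continuous supermartingale dominating $G$, and, setting $\hat\tau:=\inf\{s\in[0,T-t]:S_s=G_s\}$, the stopped process $(S_{s\wedge\hat\tau})_{s\in[0,T-t]}$ is a right-continuous martingale.

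The core of the argument is the identification $S_s=V_s$ $\bP$-a.s.\ for every $s\in[0,T-t]$. To obtain it, I would write any $\sigma\in\cT_t^s$ as $\sigma=s+\rho$, with $\rho$ a stopping time valued in $[0,T-t-s]$, and then use the time-space Markov property of $X$ at time $t+s$ to get $\bE[G_\sigma\mid\cF_s]=\bE[g(X^{t+s,y}_{t+s+\rho})]\big|_{y=X_{t+s}}$. Taking the essential supremum over $\sigma$ (equivalently over $\rho$), and using that the corresponding family of conditional expectations is upward directed, so that the essential supremum is attained along an increasing sequence, the right-hand side becomes $v(t+s,X_{t+s})=V_s$, by the very definition \eqref{eq:v} of the value function. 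Separately, right-continuity of $V$ is immediate, and in fact $V$ has continuous paths: since $v$ is continuous by Assumption \ref{ass:Standard} and $s\mapsto X_{t+s}$ is $\bP$-a.s.\ continuous, so is $s\mapsto V_s=v(t+s,X_{t+s})$. Hence the continuous process $V$ is a right-continuous modification of $S$.

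It then follows that $V$ is a right-continuous supermartingale. Moreover, since $\{S_s=G_s\}=\{v(t+s,X_{t+s})=g(X_{t+s})\}=\{(t+s,X_{t+s})\in\cD\}$, the hitting time $\hat\tau$ coincides with $\tau^*=\tau^*_{t,x}$ of \eqref{eq:tau*}, so that $V^*=(V_{s\wedge\tau^*})_{s\in[0,T-t]}$, being a right-continuous modification of $(S_{s\wedge\hat\tau})_{s\in[0,T-t]}$, is a right-continuous martingale. The main obstacle is precisely the identification $S_s=V_s$: it is here that the Markov property of the time-space process, the measurability of $v$, and the interchange of the essential supremum with the conditional expectation (justified by the upward-directedness of the family) must be handled with care, whereas everything else is a direct appeal to the standard Snell-envelope theory.
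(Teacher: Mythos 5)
Your argument is correct and is essentially the same as the paper's, which simply cites the standard Markovian optimal stopping result (\cite[Theorem 2.4]{peskir2006optimal}); what you have written out is precisely the Snell-envelope identification underlying that cited theorem. The one point to keep in mind is that with $g$ merely upper semi-continuous the reward process $G_s=g(X_{t+s})$ need not be right-continuous, so the Snell-envelope machinery must be invoked in its u.s.c./class (D) form, exactly as in the reference.
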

\begin{proof}
	See, e.g., \cite[Theorem 2.4]{peskir2006optimal}.
\end{proof}

Let $\partial_t$, $\partial_x$ and $\partial_{xx}$ denote the time derivative, the spatial derivative and the second spatial derivative, respectively, and let $\partial \cC$ denote the boundary of $\cC$.

\begin{proposition}\label{prop:FreeBound}
	We have that $v\in C^{1,2}(\cC)$ and it solves the free-boundary problem
	\begin{align}\label{eq:FreeBoundProb}
	\Big(\partial_t+\mu(t,x)\partial_x +\tfrac{1}{2}(\sigma(x))^2\partial_{xx}\Big)v(t,x)&=0, \quad \qquad \quad  (t,x)\in\cC, \\
	v(t,x)&=g(x), \hspace{30pt} (t,x)\in\partial\cC,\nonumber
	\end{align}
\end{proposition}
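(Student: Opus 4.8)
The plan is to prove the two assertions separately. The boundary condition $v=g$ on $\partial\cC$ is soft: by definition $\cD=\{v=g\}$, and every point of $\partial\cC$ is a limit of points of $\overline{\cD}$; since both $v$ and $g$ are continuous there (by Assumption \ref{ass:Standard}), passing to the limit forces $v=g$ on $\partial\cC$. The interior regularity $v\in C^{1,2}(\cC)$ together with the PDE is the substantive part, and I would establish it by a local comparison between $v$ and the classical solution of an auxiliary Dirichlet problem.

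To this end, fix $(t_0,x_0)\in\cC$ and choose a small bounded open cylinder $R$ with $(t_0,x_0)\in R$ and $\overline{R}\subset\cC$. On $R$ I would solve the Dirichlet problem for the operator appearing in \eqref{eq:FreeBoundProb},
\begin{equation*}
\Big(\partial_t+\mu(t,x)\partial_x+\tfrac{1}{2}(\sigma(x))^2\partial_{xx}\Big)u(t,x)=0 \ \text{ in } R, \qquad u=v \ \text{ on } \partial_P R,
\end{equation*}
where $\partial_P R$ denotes the parabolic boundary and the boundary datum $v$ is continuous. Classical interior parabolic theory yields a unique solution $u\in C^{1,2}(R)\cap C(\overline{R})$, and the goal is to show $u\equiv v$ on $R$.

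The identification comes from Itô's formula combined with Proposition \ref{prop:Mart}. Applying Itô to $u(t_0+s,X^{t_0,x_0}_{t_0+s})$ up to the first exit time $\tau_R$ from $R$, the finite-variation part vanishes because $u$ solves the PDE; since $\overline{R}$ is compact the resulting local martingale is a true martingale, so $u(t_0,x_0)=\bE\big[u(t_0+\tau_R,X^{t_0,x_0}_{t_0+\tau_R})\big]=\bE\big[v(t_0+\tau_R,X^{t_0,x_0}_{t_0+\tau_R})\big]$, the last equality using $u=v$ on $\partial_P R$. On the other hand, since $\overline{R}\subset\cC$ we have $\tau_R\leq\tau^*$, so $V_{s\wedge\tau_R}$ is a martingale by Proposition \ref{prop:Mart}, and optional stopping gives $v(t_0,x_0)=\bE\big[v(t_0+\tau_R,X^{t_0,x_0}_{t_0+\tau_R})\big]$. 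Comparing the two identities yields $v(t_0,x_0)=u(t_0,x_0)$. As $(t_0,x_0)$ ranges over points interior to a fixed $R$, this shows $v\equiv u$ in a neighbourhood of every point of $\cC$; hence $v\in C^{1,2}(\cC)$ and $v$ solves the PDE in \eqref{eq:FreeBoundProb}.

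The main obstacle is solvability of the local Dirichlet problem in the classical $C^{1,2}$ sense. This requires local Hölder regularity of $\mu$ and $\sigma$ and, crucially, local non-degeneracy $(\sigma(x))^2>0$ so that the operator is uniformly parabolic on $\overline{R}$; under the standing Yamada–Watanabe hypothesis $\sigma$ may vanish, so strictly one restricts to subregions of $\cC$ on which $\sigma$ is bounded away from zero, or invokes the continuity of $v$ assumed in Assumption \ref{ass:Standard} to control the boundary behaviour. This analytic input is exactly the reason the statement is regarded as standard, with the classical parabolic existence and regularity results supplying the missing ingredient (see, e.g., \cite{peskir2006optimal}).
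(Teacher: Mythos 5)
Your argument is correct and is essentially the paper's own: the paper simply defers to the standard localization argument (citing Jacka's Proposition 2.6), which is exactly what you spell out --- solve the Dirichlet problem on a small cylinder inside $\cC$, identify the classical solution with $v$ via It\^o's formula and the martingale property of $V_{\cdot\wedge\tau^*}$ from Proposition \ref{prop:Mart}, and get the boundary condition from continuity of $v$ and the definition of $\cD$. Your closing remark that local uniform parabolicity (non-vanishing $\sigma$) and H\"older continuity of the coefficients are implicitly needed is a fair and accurate observation about a hypothesis the paper leaves tacit.
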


\begin{proof}
	By Assumption \ref{ass:Standard}, $\cC$ is an open set. Then the free-boundary problem \eqref{eq:FreeBoundProb} follows, e.g., by the same arguments as in the proof of \cite[Proposition 2.6]{jacka1991optimal}.
\end{proof}

\begin{remark}
	Continuity of $v$ is not necessary to obtain Proposition \ref{prop:OptiStopTime} and Proposition \ref{prop:Mart} but lower semi-continuity would be sufficient. Moreover, these two propositions may hold with no continuity assumption on $v$: they still hold if, e.g., $g$ is continuous and non-negative and the integral condition \eqref{eq:IntCond} is satisfied (see, e.g., \cite[Appendix D]{karatzas1998methods}). For the sake of simplicity, we assume continuity of $v$,  which is necessary for Proposition \ref{prop:FreeBound}.
\end{remark}

To avoid further initial conditions on the data of the problem, we also introduce the following assumption.

\begin{assumptions}\label{ass:StopBound}
	There exists a (lower) optimal stopping boundary for the problem \eqref{eq:v}, i.e., a function $b:[0,T]\to\bR$ that separates $\cC$ from $\cD$. That is, we have
	$$\cC=\{(t,x)\in[0,T)\times\bR: x>b(t) \}$$
	and
	$$\cD=\{(t,x)\in[0,T)\times\bR: x\leq b(t) \}\cup \{T \}\times\bR.$$
\end{assumptions}
Assumption \ref{ass:StopBound} is usually proved by probabilistic arguments on a case-by-case basis (see, e.g., \cite[Proposition 2.1]{jacka1991optimal}). It is easy to see that it holds if, e.g., $x\mapsto v(t,x)-g(x)$ is non-decreasing. More general sufficient conditions that guarantee the existence of an optimal stopping boundary are shown in, e.g., \cite[Theorem 4.3]{jacka1992finite} and will be used later in Section \ref{sect:Extension}. In this paper, we prove our results when a lower stopping boundary exists but it is clear that analogous arguments would follow when an upper stopping boundary exists instead.

\section{A comparison principle}\label{sect:CompPrinc}

In this section we provide a comparison principle between solutions of the SDE \eqref{eq:X} starting at different times, which will be applied in Section \ref{sect:MainResult} to obtain monotonicity of the optimal stopping boundary.

We denote by $\cS\subseteq\bR$ the state space of the process $X$ defined in \eqref{eq:X}. For every $(t,x)\in[0,T]\times\bR$, and for a non-empty set $\cO\subseteq[0,T]\times\cS$, we define
$$\tau_\cO=\tau_\cO^{t,x}:=\inf\{s\geq0: (t+s,X^{t,x}_{t+s})\notin \cO \}\wedge(T-t).$$

\begin{lemma}\label{lemma:CompPrinc}
	Let $(t,x)\in [0,T]\times\bR$ and let $\cO\subseteq[0,T]\times\cS$ be non-empty. Assume that
	\begin{equation}\label{eq:bAssump}
	\mu(s,y)\leq \mu(u,y), \qquad \forall \: (s,y)\in \cO, \quad \forall \: u\in[0,s].
	\end{equation}
	Then, for every $u\in[0,t]$, we have that
	\begin{equation}
	\bP\Big(X^{t,x}_{t+s\wedge\tau_\cO}\leq X^{u,x}_{u+s\wedge\tau_\cO}, \quad \forall\: s\in[0,T-t]\Big)=1,
	\end{equation}
	where $\tau_\cO=\tau_\cO^{t,x}$.
\end{lemma}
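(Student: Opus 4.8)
The plan is to treat this as a one-dimensional comparison theorem of Yamada--Watanabe type, localized to the set $\cO$ through the stopping time $\tau_\cO=\tau_\cO^{t,x}$. The crucial structural feature of the setup is that the two solutions are driven by the \emph{same} Brownian motion $W$ and share the \emph{same} diffusion coefficient $\sigma$; only the time-shifted drifts differ. I would therefore introduce the difference process
$$Y_s := X^{u,x}_{u+s}-X^{t,x}_{t+s}, \qquad s\in[0,T-t],$$
which satisfies $Y_0=0$ and, up to $\tau_\cO$, an SDE whose diffusion coefficient is the difference $\sigma(X^{u,x}_{u+s})-\sigma(X^{t,x}_{t+s})$. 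The goal is to show $Y_{s\wedge\tau_\cO}\geq 0$ for all $s$, $\bP$-a.s., which is precisely the asserted ordering.

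Next I would decompose the drift of $Y$. Writing the drift increment as
$$\mu(u+s,X^{u,x}_{u+s})-\mu(t+s,X^{t,x}_{t+s})=\big[\mu(u+s,X^{u,x}_{u+s})-\mu(u+s,X^{t,x}_{t+s})\big]+\big[\mu(u+s,X^{t,x}_{t+s})-\mu(t+s,X^{t,x}_{t+s})\big],$$
the first bracket is bounded in absolute value by $L|Y_s|$, where $L$ is the Lipschitz constant of $x\mapsto\mu(u+s,x)$. The second bracket is the key term: for $s<\tau_\cO$ the point $(t+s,X^{t,x}_{t+s})$ lies in $\cO$ by the very definition of $\tau_\cO$, so applying assumption \eqref{eq:bAssump} with the later time $t+s$ and the earlier time $u+s\leq t+s$ gives $\mu(u+s,X^{t,x}_{t+s})\geq\mu(t+s,X^{t,x}_{t+s})$, i.e. this bracket is nonnegative on $[0,\tau_\cO)$. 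Thus, below the exit time, the drift of $Y$ is a nonnegative term plus a term controlled by $L|Y_s|$, which is exactly the configuration a comparison argument needs in order to force $Y$ to remain nonnegative.

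To make this rigorous I would run the standard Yamada--Watanabe argument on the negative part $Y^-=\max(-Y,0)$. Let $(\phi_n)_n$ be the $C^2$ approximations of $y\mapsto y^-$ built from the function $h$ of \eqref{eq:Assh}: choosing $a_n\downarrow 0$ with $\int_{a_n}^{a_{n-1}}h^{-1}(r)\,\ud r=n$, one obtains $\phi_n$ with $\phi_n'\in[-1,0]$, $\phi_n''(y)\leq\tfrac{2}{n\,h(|y|)}\ind_{\{-a_{n-1}<y<-a_n\}}$, and $\phi_n(y)\to y^-$. Applying Itô's formula to $\phi_n(Y_{s\wedge\tau_\cO})$ and taking expectations (after localization, the stochastic integral vanishes in expectation), the second-order term is bounded by $\tfrac12\,\bE\int_0^{s}\tfrac{2}{n\,h(|Y_r|)}\big(\sigma(X^{u,x}_{u+r})-\sigma(X^{t,x}_{t+r})\big)^2\,\ud r\leq\tfrac{T}{n}$ thanks to \eqref{eq:Assh}, and tends to $0$ as $n\to\infty$. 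In the drift term, the nonnegative bracket is multiplied by $\phi_n'(Y_r)\leq 0$ and hence contributes nonpositively, while the Lipschitz bracket is controlled by $L\,\bE\int_0^s (Y_{r\wedge\tau_\cO})^-\,\ud r$ since $\phi_n'$ is supported on $\{Y<0\}$. Passing to the limit and invoking Gronwall's inequality yields $\bE[(Y_{s\wedge\tau_\cO})^-]=0$ for each $s$, so $Y_{s\wedge\tau_\cO}\geq 0$ a.s. for each fixed $s$; continuity of the paths $s\mapsto Y_s$ then upgrades this to the simultaneous statement for all $s\in[0,T-t]$. The main obstacle I anticipate is the bookkeeping of the localization: one must verify that the nonnegative bracket is always evaluated at points of $\cO$ (which holds precisely because $\tau_\cO$ is the exit time of the \emph{lower} process $X^{t,x}$), and handle the behaviour at $s=\tau_\cO$ by a standard continuity argument.
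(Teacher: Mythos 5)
Your argument is correct and follows essentially the same route as the paper: the same decomposition of the drift difference (monotonicity on $\cO$ up to the exit time $\tau_\cO^{t,x}$ of the lower process, plus the Lipschitz bound), the same use of condition \eqref{eq:Assh} to kill the second-order contribution, and Gronwall to conclude. The only difference is cosmetic: the paper applies Tanaka's formula to $(X^{t,x}_{t+\cdot}-X^{u,x}_{u+\cdot})^+$ after noting that the local time at zero vanishes, whereas you run the Yamada--Watanabe $\phi_n$-approximation directly on the negative part of your difference process -- two interchangeable implementations of the same estimate.
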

\begin{proof}
	Let $X^{1}_s:=X^{t,x}_{t+s}$, $X^2_s:=X^{u,x}_{u+s}$, $\mu_1(s,y):=\mu(t+s,y)$ and $\mu_2(s,y):=\mu(u+s,y)$ with $u\in[0,t]$. Thus, for $i=1,2$, we have
	$$X^i_s = x+\int_0^s \mu_i(r,X^i_r)\ud r+\int_0^s \sigma(X^i_r)\ud W_r.$$
	Then, for $Y:=X^1-X^2$ by assumption \eqref{eq:Assh}, we obtain
	$$\int_0^s h(Y_r)^{-1}\ind_{\{Y_r>0 \}}\ud[Y]_r=\int_0^s h(|X^1_r-X^2_r|)^{-1}\big(\sigma(X^1_r)-\sigma(X^2_r)\big)^2\ind_{\{Y_r>0 \}}\ud r\leq s.$$
	Therefore, we have (see, e.g., \cite[Ch.~V, Prop.~39.3]{rogers2000diffusions}) that $L^0_s(Y)=0$ for every $s\in[0,T]$, where $L^0(Y)$ denotes the local time of $Y$ at $0$. Thus, by Tanaka's formula, for every $s\in[0,T-t]$ we obtain
	\begin{align*}
	\big(X^1_{s\wedge\tau_\cO}-X^2_{s\wedge\tau_\cO}\big)^+&=\int_{0}^{s\wedge\tau_\cO}\big(\mu_1(r,X^1_r)-\mu_2(r,X^2_r)\big)\ind_{\{X^1_r-X^2_r>0 \}}\ud r\\
	&\hspace{12pt}+\int_{0}^{s\wedge\tau_\cO}\big(\sigma(X^1_r)-\sigma(X^2_r)\big)\ind_{\{X^1_r-X^2_r>0 \}}\ud W_r,
	\end{align*}
	where $\tau_\cO=\tau_\cO^{t,x}$ and $(x)^+:=\max\{x,0 \}$. Hence,
	\begin{align*}
	0&\leq \bE\Big[\big(X^1_{s\wedge\tau_\cO}-X^2_{s\wedge\tau_\cO}\big)^+\Big]\\
	&=\bE\Big[\int_{0}^{s\wedge\tau_\cO}\big(\mu(t+r,X^1_r)-\mu(u+r,X^2_r)\big)\ind_{\{X^1_r-X^2_r>0 \}}\ud r\Big]\\
	&\leq \bE\Big[\int_{0}^{s\wedge\tau_\cO}\big(\mu(u+r,X^1_r)-\mu(u+r,X^2_r)\big)\ind_{\{X^1_r-X^2_r>0 \}}\ud r\Big]\\
	&\leq \bE\Big[\int_{0}^{s\wedge\tau_\cO} K(u+r)\big(X^1_{r}-X^2_{r}\big)^+\ud r\Big],
	\end{align*}
	where $K(t)>0$ is the Lipschitz constant for $x\mapsto \mu(t,x)$ and the second to last inequality follows from assumption \eqref{eq:bAssump}. Then, by Gronwall's lemma, we obtain that
	$$\bE\Big[\big(X^1_{s\wedge\tau_\cO}-X^2_{s\wedge\tau_\cO}\big)^+\Big]=0, \quad \forall \: s\in[0,T-t],$$
	and by continuity of $Y=X^1-X^2$ we reach the desired result.
\end{proof}

\begin{remark}
	Let $(t,x)\in[0,T]\times\bR$. Notice that if $\cO=[0,T]\times\cS$, then $\tau^{t,x}_\cO=T-t$  and so the result of Lemma \ref{lemma:CompPrinc} reads
	\begin{equation}
	\bP\Big(X^{t,x}_{t+s}\leq X^{u,x}_{u+s}, \quad \forall\: s\in[0,T-t]\Big)=1.
	\end{equation}
\end{remark}

\section{Main results}\label{sect:MainResult}

In this section we illustrate our main result for the optimal stopping problem \eqref{eq:v}, which provides monotonicity of $t\mapsto v(t,x)$ and in turn implies monotonicity of the stopping boundary. This is obtained by means of Lemma \ref{lemma:CompPrinc} and will be presented in two versions (Theorem \ref{thm:v_t} and Theorem \ref{thm:v_tTauB}) under different assumptions.

\begin{theorem}\label{thm:v_t}
	Let Assumption \ref{ass:Standard} hold. Moreover, assume that
	\begin{enumerate}[(i)]
		\item $x\mapsto g(x)$ is non-decreasing.
		\item $t\mapsto \mu(t,x)$ is non-increasing for every $x\in\cS$.
	\end{enumerate}
	Then, $t\mapsto v(t,x)$ is non-increasing for every $x\in\bR$. 
\end{theorem}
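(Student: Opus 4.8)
The plan is to fix $x \in \bR$ and two times $u \le t$ in $[0,T)$, and to show directly that $v(u,x) \ge v(t,x)$ by comparing the two stopping problems through their respective state processes $X^{u,x}$ and $X^{t,x}$. The natural device is Lemma \ref{lemma:CompPrinc} applied with $\cO = [0,T]\times\cS$, which under assumption (ii) gives the pathwise ordering $X^{t,x}_{t+s} \le X^{u,x}_{u+s}$ for all $s \in [0,T-t]$, $\bP$-almost surely. Note that the horizons differ: admissible stopping times for the problem started at $t$ lie in $[0,T-t]$, which is a \emph{smaller} interval than $[0,T-u]$ since $u \le t$. This is exactly the structural feature that makes the value at the earlier time at least as large, and it must be used in tandem with the state comparison.

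\medskip

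First I would take the optimal stopping time $\tau^* = \tau^*_{t,x} \in \cT_t$ for the problem started at $(t,x)$, given by Proposition \ref{prop:OptiStopTime}; since $\tau^* \le T-t \le T-u$, it is also admissible for the problem started at $(u,x)$, i.e.\ $\tau^* \in \cT_u$. Using $\tau^*$ as a (sub-optimal) competitor in the supremum defining $v(u,x)$ yields
\begin{equation}
v(u,x) \ge \bE\Big[g\big(X^{u,x}_{u+\tau^*}\big)\Big].
\end{equation}
Next I would invoke the monotonicity of $g$ from assumption (i): evaluating the pathwise inequality $X^{t,x}_{t+\tau^*} \le X^{u,x}_{u+\tau^*}$ (valid $\bP$-a.s.\ on the common horizon) and applying the non-decreasing function $g$ preserves the order, so $g(X^{t,x}_{t+\tau^*}) \le g(X^{u,x}_{u+\tau^*})$ $\bP$-a.s. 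Taking expectations and chaining gives
\begin{equation}
v(u,x) \ge \bE\Big[g\big(X^{u,x}_{u+\tau^*}\big)\Big] \ge \bE\Big[g\big(X^{t,x}_{t+\tau^*}\big)\Big] = v(t,x),
\end{equation}
where the final equality is the optimality of $\tau^*$ for $(t,x)$. Since $u \le t$ and $x$ were arbitrary, $t \mapsto v(t,x)$ is non-increasing.

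\medskip

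The one point demanding care is that the pathwise comparison from Lemma \ref{lemma:CompPrinc} must hold \emph{simultaneously} on the whole random time grid up to and including the evaluation point $\tau^*$, so that substituting the random time $\tau^*$ into the a.s.\ inequality is legitimate; this is precisely why the lemma is stated as an inequality holding for \emph{all} $s \in [0,T-t]$ on a single full-probability event, rather than for each fixed $s$ separately. I expect the main (and essentially only) obstacle to be the bookkeeping of the two horizons: one must verify that $\tau^*$, optimal for the later problem, remains admissible for the earlier problem, which hinges on $T - t \le T - u$ and hence on the monotone-horizon structure of finite-horizon problems. Everything else is a direct composition of the comparison principle, the monotonicity of $g$, and optimality.
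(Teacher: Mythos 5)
Your proof is correct and follows essentially the same route as the paper: apply Lemma \ref{lemma:CompPrinc} with $\cO=[0,T]\times\cS$, use $\tau^*_{t,x}$ as an admissible (sub-optimal) stopping time for the problem started at $(u,x)$, and conclude via monotonicity of $g$. The only cosmetic difference is that you bound $v(u,x)$ from below directly by the definition of the supremum, whereas the paper phrases the same bound through the supermartingale property of the value process $V^{u,x}$ from Proposition \ref{prop:Mart}; both are valid.
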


\begin{proof}
	Let $(t,x)\in[0,T]\times\bR$ and $u\in[0,t]$. By assumption (ii), we can apply Lemma \ref{lemma:CompPrinc} with $\cO=[0,T]\times\cS$ and obtain that
	\begin{equation}\label{eq:ComparisonFull}
	\bP\Big(X^{t,x}_{t+s}\leq X^{u,x}_{u+s}, \quad \forall\: s\in[0,T-t]\Big)=1.
	\end{equation}
	By the (super)martingale property of the value function (recall Proposition \ref{prop:Mart}) and since $\tau^*=\tau^*_{t,x}$ is optimal for $v(t,x)$ and sub-optimal for $v(u,x)$, we have that
	\begin{align}\label{eq:vDiff}
	v(t,x)-v(u,x)&=V_0^{t,x}-V_0^{u,x}\leq \bE\Big[V_{\tau^*}^{t,x}-V_{\tau^*}^{u,x}\Big]\nonumber\\
	&=\bE\Big[v(t+\tau^*,X^{t,x}_{t+\tau^*})-v(u+\tau^*,X^{u,x}_{u+\tau^*}) \Big] \nonumber\\
	&\leq\bE\Big[g(X^{t,x}_{t+\tau^*})-g(X^{u,x}_{u+\tau^*}) \Big]
	\leq 0,
	\end{align}
	where to obtain the last inequality we have used assumption (i) and result \eqref{eq:ComparisonFull}. Hence, $t\mapsto v(t,x)$ is non-increasing for every $x\in\bR$.
\end{proof}

We now show that we can weaken the monotonicity assumption on $t\mapsto \mu(t,x)$ but still obtain, under an additional assumption, the same result as in Theorem \ref{thm:v_t}. This alternative version partially relies on the free-boundary problem \eqref{eq:FreeBoundProb} and turns out to be useful in some optimal stopping problems, as we will illustrate in Section \ref{sect:Examples}.

Let $\cM:=\{(t,x)\in[0,T]\times\cS:\mu(t,x)<0 \}$ and let us denote by $\cM^c$ its complement, i.e.,
\begin{equation}\label{eq:Mc}
\cM^c:=([0,T]\times\cS)\setminus \cM=\{(t,x)\in[0,T]\times\cS: \mu(t,x)\geq 0 \}.
\end{equation}
Throughout this paper we also assume that $\cM$ is an open set, so that  $(t+\tau_\cM,X_{t+\tau_\cM})\in \cM^c$ on $\{\tau_\cM<T-t \}$, where recall that
\begin{equation}\label{eq:TauM}
\tau_\cM=\tau_\cM^{t,x}:=\inf\{s\geq0: (t+s,X^{t,x}_{t+s})\notin \cM \}\wedge(T-t).
\end{equation}
This holds if, e.g., $\mu$ is upper semi-continuous.

\begin{theorem}\label{thm:v_tTauB}
	 Let Assumption \ref{ass:Standard} hold. Moreover, assume that
	\begin{enumerate}[(i)]
		\item $x\mapsto g(x)$ is non-decreasing.
		\item $\mu(t,x)\leq \mu(t-\eps,x)$ for every $(t,x)\in \cM$, $\eps\in(0,t)$.
		\item $ \sigma^2(x)\partial_{xx} v(t,x)\geq -2\mu(t,x)\partial_x v(t,x)$ for every $(t,x)\in\cC\cap \cM^c$.
	\end{enumerate}
	Then, $t\mapsto v(t,x)$ is non-increasing for every $x\in\bR$.
\end{theorem}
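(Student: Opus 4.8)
The plan is to combine the comparison principle of Lemma \ref{lemma:CompPrinc}, now applied on the region $\cM$ rather than on all of $[0,T]\times\cS$, with the free-boundary characterisation of Proposition \ref{prop:FreeBound}, which is where assumption (iii) enters. Two preliminary observations streamline the argument. First, since the coefficients of \eqref{eq:X} give a flow that is monotone in the initial datum (a standard consequence of the Tanaka--Gronwall argument already used in Lemma \ref{lemma:CompPrinc}), assumption (i) yields that $x\mapsto v(t,x)$ is non-decreasing for every $t$. Second, it suffices to prove $v(t,x)\le v(u,x)$ for $u\in[0,t]$ and $(t,x)\in\cC$: on $\cD$ we have $v(t,x)=g(x)\le v(u,x)$ because $v\ge g$ everywhere. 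So I fix $(t,x)\in\cC$ and $u\in[0,t]$.

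By assumption (ii), Lemma \ref{lemma:CompPrinc} applies with $\cO=\cM$, giving
\[
X^{t,x}_{t+s\wedge\tau_\cM}\le X^{u,x}_{u+s\wedge\tau_\cM},\qquad \forall\,s\in[0,T-t],\quad \bP\text{-a.s.},
\]
with $\tau_\cM=\tau_\cM^{t,x}$ as in \eqref{eq:TauM}. Set $\rho:=\tau^*_{t,x}\wedge\tau_\cM$. Using that the stopped process $V^{t,x}_{\cdot\wedge\tau^*}$ is a martingale while $V^{u,x}$ is a supermartingale (Proposition \ref{prop:Mart}), optional sampling at $\rho$ gives $v(t,x)=\bE[V^{t,x}_\rho]$ and $v(u,x)\ge\bE[V^{u,x}_\rho]$, whence
\[
v(t,x)-v(u,x)\le \bE\Big[v(t+\rho,X^{t,x}_{t+\rho})-v(u+\rho,X^{u,x}_{u+\rho})\Big].
\]
On the event $\{\tau^*\le\tau_\cM\}$ the path remains in $\cM$ up to $\tau^*$, so the comparison above applies at $s=\tau^*$ and $v(t+\tau^*,\cdot)=g$; exactly as in Theorem \ref{thm:v_t} one then bounds the integrand by $g(X^{t,x}_{t+\tau^*})-g(X^{u,x}_{u+\tau^*})\le 0$ using (i) and $v\ge g$.

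The remaining event $\{\tau_\cM<\tau^*\}$ is where assumption (iii) is needed. Here $(t+\tau_\cM,X^{t,x}_{t+\tau_\cM})\in\cC\cap\cM^c$ (using that $\cM$ is open), and the comparison gives $X^{t,x}_{t+\tau_\cM}\le X^{u,x}_{u+\tau_\cM}$. Monotonicity of $v$ in its spatial argument reduces the integrand to the time-monotonicity statement $v(t+\tau_\cM,y)\le v(u+\tau_\cM,y)$ at the fixed point $y:=X^{t,x}_{t+\tau_\cM}$; and on $\cC$ the free-boundary equation \eqref{eq:FreeBoundProb} rewrites assumption (iii) precisely as $\partial_t v=-\mu(t,x)\partial_x v-\tfrac12(\sigma(x))^2\partial_{xx}v\le 0$ on $\cC\cap\cM^c$.

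I expect the main obstacle to be closing this last step, since integrating $\partial_t v\le 0$ from $u+\tau_\cM$ to $t+\tau_\cM$ at the fixed level $y$ requires the whole vertical segment to remain in $\cC\cap\cM^c$, whereas it may re-enter $\cM$, returning us to the very statement we are proving. I would resolve this by iterating the decomposition across the successive excursions of the path between $\cM$ and $\cM^c$: on $\cM$-excursions the ordering of Lemma \ref{lemma:CompPrinc} controls the spatial direction, while on $\cM^c$-excursions the sign $\partial_t v\le 0$ from (iii) controls the time direction. Equivalently, one may run a parabolic maximum-principle argument for $p:=\partial_t v$ on $\cC$: differentiating \eqref{eq:FreeBoundProb} gives $\big(\partial_t+\mu(t,x)\partial_x+\tfrac12(\sigma(x))^2\partial_{xx}\big)p=-(\partial_t\mu)\,\partial_x v$, whose right-hand side is non-negative on $\cM$ by (ii) together with $\partial_x v\ge 0$, while (iii) furnishes the a priori sign $p\le 0$ on the $\cM^c$-part of the domain. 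Making either route rigorous---controlling the interface $\partial\cM\cap\cC$ and guaranteeing that the iteration terminates---is the delicate point.
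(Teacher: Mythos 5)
Your setup is the right one---comparison principle on $\cM$, the split over $\{\tau^*\le\tau_\cM\}$ and $\{\tau_\cM<\tau^*\}$, and the observation that on $\cC\cap\cM^c$ the free-boundary equation \eqref{eq:FreeBoundProb} turns assumption (iii) into $\partial_t v\le 0$---and you have correctly located the obstruction. But the obstruction is real and your proposal does not overcome it: working with the finite difference $v(t,x)-v(u,x)$ forces you, on $\{\tau_\cM<\tau^*\}$, to compare $v$ at the two times $u+\tau_\cM$ and $t+\tau_\cM$ at the fixed spatial level $y=X^{t,x}_{t+\tau_\cM}$, and the vertical segment between them need not stay in $\cC\cap\cM^c$. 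Your two proposed fixes do not close this: the excursion iteration restarts the same problem at a new base point with no guarantee of termination (the path may cross $\partial\cM$ infinitely often), and the maximum-principle route for $p=\partial_t v$ requires differentiating \eqref{eq:FreeBoundProb} in $t$ (regularity of $v$ beyond $C^{1,2}$, plus a sign for $\partial_t\mu$ that assumption (ii) only gives in a non-infinitesimal form) and, more seriously, boundary data for $p$ on $\partial\cC$ and on $\cC\cap\partial\cM$, which you do not have.

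The missing idea is to make the probabilistic estimate \emph{infinitesimal} rather than finite: take $u=t-\eps$, run your exact chain of inequalities to get
\begin{equation}
v(t,x)-v(t-\eps,x)\le \bE\Big[\ind_{\{\tau_\cM<\tau^*\}}\big(v(t+\tau_\cM,X^{t,x}_{t+\tau_\cM})-v(t-\eps+\tau_\cM,X^{t,x}_{t+\tau_\cM})\big)\Big],
\end{equation}
then divide by $\eps$ and let $\eps\to0$ (dominated convergence via \eqref{eq:IntCond}). This yields $\partial_t v(t,x)\le \bE\big[\ind_{\{\tau_\cM<\tau^*\}}\,\partial_t v(t+\tau_\cM,X^{t,x}_{t+\tau_\cM})\big]\le 0$, because only the derivative of $v$ at the single exit point $(t+\tau_\cM,X^{t,x}_{t+\tau_\cM})\in\cC\cap\cM^c$ is needed---no vertical segment, hence no re-entry problem. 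One thus obtains $\partial_t v\le 0$ everywhere off $\partial\cC$, and a separate, purely deterministic step upgrades this to global monotonicity of $t\mapsto v(t,x)$: for $(t,x)\in\cC$, suppose some $(s,x)\in\cD$ with $s<t$, let $t_0$ be the supremum of such $s$; then $(r,x)\in\cC$ for $r\in(t_0,t]$, integrating $\partial_t v\le 0$ over that interval and using continuity of $v$ gives $g(x)=v(t_0,x)\ge v(t,x)>g(x)$, a contradiction. This last step is also what handles the crossing of $\partial\cC$, which your finite-difference formulation leaves unaddressed.
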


\begin{remark}\label{Rmk:Assumpt(i)}
	Since $x\mapsto X^{t,x}$ is non-decreasing (see, e.g., \cite[Ch.~V, Th.~43.1]{rogers2000diffusions}) and, under the assumptions of Theorem \ref{thm:v_tTauB}, $x\mapsto g(x)$ is non-decreasing, we also have that $x\mapsto v(t,x)$ is non-decreasing. Thus, notice that assumption (iii) holds, in particular, if $x\mapsto v(t,x)$ is convex. This is in turn implied by convexity of $x\mapsto X^{t,x}$ and of $x\mapsto g(x)$. Therefore, assumption (iii) of Theorem \ref{thm:v_tTauB} can be substituted by convexity of $x\mapsto X^{t,x}$ and of $x\mapsto g(x)$. However, if $\sigma(x)$ is sufficiently small or if $\mu(t,x)\partial_x v(t,x)$ is sufficiently large on $\cM^c$, then we may not need $x\mapsto v(t,x)$ to be convex in order to satisfy assumption (iii).
\end{remark}

\begin{proof}
	We prove the result of the theorem in two steps. We first show that $\partial_t v(t,x)\leq 0$ for every $(t,x)\notin\partial\cC$ and we then prove that this implies that $t\mapsto v(t,x)$ is non-increasing for every $x\in\bR$.
	
	\textit{Step 1.} If $(t,x)\in\cD\setminus\partial\cC$ then $v(t,x)=g(x)$ and so $\partial_t v(t,x)= 0$. If $(t,x)\in\cC\cap \cM^c$ (we can skip this step if $\cM^c=\emptyset$), then by \eqref{eq:FreeBoundProb}
	$$\partial_t v(t,x)+\mu(t,x)\partial_x v(t,x) +\tfrac{1}{2}(\sigma(x))^2\partial_{xx}v(t,x)=0,$$
	and, by assumption (iii), we obtain
	\begin{equation}\label{eq:v_t}
	\partial_t v(t,x)\leq 0, \qquad \forall \: (t,x)\in\cC\cap \cM^c.
	\end{equation}
	
	To conclude the proof we consider $(t,x)\in\cC\cap \cM$ (we can skip this step if $\cM=\emptyset$). By assumption (ii) we can apply Lemma \ref{lemma:CompPrinc} with $\cO=\cM$ and, for every $\eps\in(0,t)$, we obtain
	\begin{equation}\label{eq:Comparison}
	\bP\Big(X^{t,x}_{t+s\wedge\tau_\cM}\leq X^{t-\eps,x}_{t-\eps+s\wedge\tau_\cM}, \quad \forall\: s\in[0,T-t]\Big)=1,
	\end{equation}
	where $\tau_\cM=\tau_\cM^{t,x}$ is defined in \eqref{eq:TauM}. Let $\eps\in(0,t)$, $\tau^*=\tau^*_{t,x}$ (recall \eqref{eq:tau*}) and $\rho:=\tau^*\wedge\tau_\cM$. By the (super)martingale property of the value function (recall Proposition \ref{prop:Mart}) and since $\tau^*$ is optimal for $v(t,x)$ and $\rho$ is sub-optimal for $v(t-\eps,x)$, we have that
	\begin{align}\label{eq:DiffQuot}
	v(t,x)-v(t-\eps,x)&\leq\bE\Big[v(t+\rho,X^{t,x}_{t+\rho})-v(t-\eps+\rho,X^{t-\eps,x}_{t-\eps+\rho}) \Big]\nonumber \\
	&\leq\bE\Big[\ind_{\{\tau^*\leq \tau_\cM \}}\big(g(X^{t,x}_{t+\tau^*})-g(X^{t-\eps,x}_{t-\eps+\tau^*}) \big)\Big]\nonumber \\
	&\hspace{12pt}+\bE\Big[\ind_{\{\tau_\cM<\tau^* \}}\big(v(t+\tau_\cM,X^{t,x}_{t+\tau_\cM})-v(t-\eps+\tau_\cM,X^{t-\eps,x}_{t-\eps+\tau_\cM}) \big)\Big]\nonumber \\
	&=\bE\Big[\ind_{\{\tau^*\leq \tau_\cM \}}\big(g(X^{t,x}_{t+\tau^*})-g(X^{t-\eps,x}_{t-\eps+\tau^*}) \big)\Big]\nonumber \\
	&\hspace{12pt}+\bE\Big[\ind_{\{\tau_\cM<\tau^* \}}\big(v(t+\tau_\cM,X^{t,x}_{t+\tau_\cM})-v(t-\eps+\tau_\cM,X^{t,x}_{t+\tau_\cM}) \big)\Big]\nonumber\\
	&\hspace{12pt}+\bE\Big[\ind_{\{\tau_\cM<\tau^* \}}\big(v(t-\eps+\tau_\cM,X^{t,x}_{t+\tau_\cM})-v(t-\eps+\tau_\cM,X^{t-\eps,x}_{t-\eps+\tau_\cM}) \big)\Big]\nonumber\\
	&\leq \bE\Big[\ind_{\{\tau_\cM<\tau^* \}}\big(v(t+\tau_\cM,X^{t,x}_{t+\tau_\cM})-v(t-\eps+\tau_\cM,X^{t,x}_{t+\tau_\cM}) \big)\Big],
	\end{align}
	where to obtain the last inequality we have used assumption (i) and result \eqref{eq:Comparison} for the first term; result \eqref{eq:Comparison} and the fact that $x\mapsto v(t,x)$ is non-decreasing (recall Remark \ref{Rmk:Assumpt(i)}) for the third term. Dividing by $\eps$, letting $\eps\to 0$ and applying dominated convergence theorem (by assumption \eqref{eq:IntCond}), we obtain
	$$\partial_t v(t,x)\leq \bE\Big[\ind_{\{\tau_\cM<\tau^* \}}\partial_t v(t+\tau_\cM,X^{t,x}_{t+\tau_\cM})\Big]\leq 0,$$
	where the last inequality follows from \eqref{eq:v_t}. Hence, $\partial_t v(t,x)\leq 0$ also for $(t,x)\in\cC\cap \cM$ and the proof of Step 1 is completed.
	
	\textit{Step 2.} If $(t,x)\in\cD$, then $v(t,x)=g(x)$ and, since $v(s,x)\geq g(x)$ for every $(s,x)\in[0,T]\times\bR$, then $v(s,x)\geq v(t,x)$ for every $s\in[0,t]$.
	
	Now let $(t,x)\in\cC$. We want to show that also $(s,x)\in\cC$ for every $s\in[0,t]$, which by Step 1 would imply that $v(s,x)\geq v(t,x)$ for every $s\in[0,t]$ and would conclude the proof. Assume, by contradiction, that
	$$D_{t,x}:=\{s\in[0,t]: (s,x)\in\cD \}\neq\emptyset$$
	and let $t_0:=\sup D_{t,x}$. Recall that, since $(t,x)\in\cC$, we have $v(t,x)>g(x)$. Since $v$ is continuous (by Assumption \ref{ass:Standard}), then $t_0<t$ and $t_0\in D_{t,x}$, i.e., $(t_0,x)\in\cD$ and so $v(t_0,x)=g(x)$. Moreover, by definition of $t_0$, we have $(s,x)\in\cC$ for every $s\in(t_0,t]$ and so
	$$v(t,x)-v(s,x)=\int_{t_0}^t \partial_t v(s,x)\ud s\leq 0, \qquad \forall \: s\in(t_0,t],$$
	where the last inequality follows from Step 1. Hence, by continuity of $v$, we have that $v(t_0,x)\geq v(t,x)$. This leads to a contradiction, as we would obtain
	$$g(x)=v(t_0,x)\geq v(t,x)>g(x).$$
\end{proof}

\begin{remark}
	If assumption (ii) in Theorem \ref{thm:v_t} (and similarly for assumptions (ii) and (iii) in Theorem \ref{thm:v_tTauB}) is substituted by a symmetric assumption (i.e., if $t\mapsto \mu(t,x)$ is increasing) then, in infinite-horizon problems, we would obtain a symmetric result, i.e., $t\mapsto v(t,x)$ would be increasing. However, this is, in general, not the case for finite-horizon problems. In that context we would have two opposite driving effects as time increases: the drift $\mu$ that increases and the stopping time domain $\cT_t$ that shrinks. The former leads to an increase of the value function with respect to time, whereas the latter leads to a decrease of the value function with respect to time. In order to study the monotonicity of $t\mapsto v(t,x)$ in such problems, it would be necessary (and, perhaps, not sufficient) to have a quantitative information on the monotonicity of $t\mapsto \mu(t,x)$.
\end{remark}

\begin{remark}
	In some cases it is possible to apply a pure PDE approach, as in \eqref{eq:v_t}, and to derive monotonicity of $t\mapsto v(t,x)$ also when the diffusion coefficient may be time-dependent. Consider the same SDE as in \eqref{eq:X} but when also $\sigma$ may be a function of time, i.e., $\sigma:[0,T]\times\bR\to\bR$. If $\mu(t,x)\geq 0$ for every $(t,x)\in[0,T]\times\cS$, then $\cM^c=[0,T]\times\cS$ (recall \eqref{eq:Mc}). Thus, under assumptions (i) and (iii) of Theorem \ref{thm:v_tTauB} and in the same way as in \eqref{eq:v_t}, we would obtain
	$$\partial_t v(t,x)\leq 0, \qquad \forall\: (t,x)\notin\partial\cC.$$
	Monotonicity of $t\mapsto v(t,x)$ then follows as in Step 2 of the proof of Theorem \ref{thm:v_tTauB}.
\end{remark}

Monotonicity of $t\mapsto v(t,x)$, which follows from either Theorem \ref{thm:v_t} or Theorem \ref{thm:v_tTauB}, then yields monotonicity of the optimal stopping boundary.

\begin{corollary}\label{cor:Monot}
	If $t\mapsto v(t,x)$ is non-increasing for every $x\in\bR$ and Assumption \ref{ass:StopBound} holds, then the optimal stopping boundary $t\mapsto b(t)$ is non-decreasing.
\end{corollary}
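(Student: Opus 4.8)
The plan is to recast the asserted monotonicity of $b$ as a forward-in-time closure property of the stopping region $\cD$, and then to obtain that property directly from the hypothesis on $v$. The starting observation is the elementary bound $v(t,x)\geq g(x)$ for all $(t,x)\in[0,T]\times\bR$, which holds because $\tau\equiv 0\in\cT_t$ is admissible in \eqref{eq:v} and returns $\bE[g(X^{t,x}_t)]=g(x)$.

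I would then fix $x\in\bR$ and times $0\leq t_1\leq t_2\leq T$ and suppose $(t_1,x)\in\cD$, i.e.\ $v(t_1,x)=g(x)$. Combining the hypothesis that $t\mapsto v(t,x)$ is non-increasing with the bound above yields
$$g(x)\leq v(t_2,x)\leq v(t_1,x)=g(x),$$
so that $v(t_2,x)=g(x)$ and hence $(t_2,x)\in\cD$. In other words, each vertical fibre $\{x\}\times[0,T]$ enters $\cD$ and never leaves it as time increases.

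The final step is to read off monotonicity of the boundary from Assumption \ref{ass:StopBound}. That assumption identifies, for each fixed $t$, the $t$-section of $\cD$ with the half-line $(-\infty,b(t)]$ (and with all of $\bR$ at $t=T$). The forward-closure just proved gives $(-\infty,b(t_1)]\subseteq(-\infty,b(t_2)]$ whenever $t_1\leq t_2$, which is precisely $b(t_1)\leq b(t_2)$; thus $t\mapsto b(t)$ is non-decreasing.

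I do not anticipate a real obstacle here: the entire content is the equivalence between upward monotonicity of a \emph{lower} boundary and forward-in-time closure of $\cD$, together with the cost-free inequality $v\geq g$. The only items demanding a little care are keeping the orientations consistent (a lower boundary paired with a non-increasing value function yields a non-decreasing boundary), and the terminal slice $\{T\}\times\bR$, where $\cT_T=\{0\}$ forces $v(T,\cdot)=g$ so that $(T,x)\in\cD$ holds automatically and does not disrupt the argument.
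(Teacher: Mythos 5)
Your proof is correct and is essentially the paper's argument in contrapositive form: the paper shows the continuation region is closed backward in time via $v(s,x)\geq v(t,x)>g(x)$ for $s\leq t$, while you show the stopping region is closed forward in time via $g(x)\leq v(t_2,x)\leq v(t_1,x)=g(x)$; both then read off monotonicity of $b$ from Assumption \ref{ass:StopBound} in the same way.
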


\begin{proof}
	Let $(t,x)\in\cC$. Then, $v(t,x)>g(x)$ and, since $t\mapsto v(t,x)$ is non-increasing, we obtain that $v(s,x)\geq v(t,x)>g(x)$ for every $s\in[0,t]$. Hence, also $(s,x)\in\cC$ and thus $t\mapsto b(t)$ is non-decreasing.
\end{proof}

\begin{remark}
	Monotonicity of $t\mapsto v(t,x)$ is also a helpful result to obtain continuity of the stopping boundary (see, e.g., arguments as in \cite[Section 3]{de2015note} and \cite[Lemma 4]{de2020optimal}).
\end{remark}

\section{Extension to time-dependent reward functions}\label{sect:Extension}

In this section we show how to obtain monotonicity of the stopping boundary for more general time-inhomogeneous optimal stopping problems, which include a running reward function and a terminal reward function that may also depend on time. We consider the same underlying framework of Section \ref{sect:PbForm} but we study the optimal stopping problem
\begin{equation}\label{eq:vExt}
v(t,x):=\sup_{\tau\in\cT_t}\bE\bigg[\int_0^{\tau} f(t+s,X^{t,x}_{t+s})\ud s+g(t+\tau,X^{t,x}_{t+\tau})\bigg], \qquad (t,x)\in[0,T]\times\bR,
\end{equation}
where $X=X^{t,x}$ is defined in \eqref{eq:X}, $f:[0,T]\times\bR\to\bR$ is a running reward function and $g:[0,T]\times\bR\to\bR$ is a terminal reward function. For this problem the continuation region $\cC$ and the stopping region $\cD$ are defined, respectively, by
\begin{equation}
\cC:=\{(t,x)\in[0,T]\times\bR: v(t,x)>g(t,x) \}
\end{equation}
and
\begin{equation}
\cD:=\{(t,x)\in[0,T]\times\bR: v(t,x)=g(t,x) \}.
\end{equation}
We then introduce the following assumption.
\begin{assumptions}\label{ass:StandardExt}
	We have that $g\in C^{1,2}([0,T]\times\bR)$, the value function $v:[0,T]\times\bR\to\bR$ is continuous and, for every $(t,x)\in[0,T]\times\bR$,
	$$\bE\bigg[\sup_{s\in[t,T]} \Big|\int_0^{s} f(r,X^{t,x}_{r})\ud r+g(s,X^{t,x}_{s})\Big|\bigg]<\infty.$$
\end{assumptions}
Notice that Assumption \ref{ass:StandardExt} is the analogous of Assumption \ref{ass:Standard} except for the stronger regularity of $g$. Under this regularity, we can apply Ito's formula and obtain that
$$g(t+s,X_{t+s})=g(t,x)+\int_0^s \cL g(t+r,X_{t+r})\ud r, \qquad \forall\: s\in[0,T-t],$$
where $\cL$ is defined by
$$\cL g(t,x):=\big(\partial_t+\mu(t,x)\partial_x+\tfrac 1 2 (\sigma(x))^2\partial_{xx}\big) g(t,x).$$
The function $w(t,x):=v(t,x)-g(t,x)$ is, thus, the value function for the optimal stopping problem
\begin{equation}\label{eq:w}
w(t,x)=\sup_{\tau\in\cT_t}\bE\bigg[\int_0^\tau h(t+s,X^{t,x}_{t+s})\ud s\bigg],\qquad (t,x)\in[0,T]\times\bR,
\end{equation}
where $h(t,x):=f(t,x)+\cL g(t,x)$. Then, notice that
$$\cC=\{(t,x)\in[0,T]\times\bR: w(t,x)>0 \}$$
and
$$\cD=\{(t,x)\in[0,T]\times\bR: w(t,x)=0 \}.$$
Analogously to Section \ref{sect:PbForm}, under Assumption \ref{ass:StandardExt}, we have that the stopping time
\begin{equation}
\tau^*=\tau^*_{t,x}:=\inf\{s\in[0,T-t]: (t+s,X^{t,x}_{t+s})\notin \cC\}
\end{equation}
is optimal for the problem \eqref{eq:vExt} (and thus also for the problem \eqref{eq:w}). Moreover, we obtain that the process $V:=(V_s)_{s\in[0,T-t]}$, defined by
\begin{equation}\label{eq:Vmart}
V_s=V_s^{t,x}:=\int_0^s h(t+r,X^{t,x}_{t+r})\ud r+w(t+s,X^{t,x}_{t+s}),
\end{equation}
is a right-continuous supermartingale and the process $V^*:=(V_{s\wedge\tau^*})_{s\in[0,T-t]}$ is a right-continuous martingale.

\begin{remark}
	For the sake of simplicity, we have assumed $g\in C^{1,2}([0,T]\times\bR)$ but one may consider different (weaker) conditions in order to apply Ito's formula and reformulate the stopping problem \eqref{eq:vExt} into the stopping problem \eqref{eq:w}.
\end{remark}

We then study the optimal stopping problem \eqref{eq:vExt} by means of the equivalent problem \eqref{eq:w}. We have the following result on the existence of an optimal stopping boundary.

\begin{proposition}\label{prop:BoundExist}
	Assume that $x\mapsto h(t,x)$ is non-decreasing for every $t\in[0,T]$, then there exists a lower optimal stopping boundary for the problem \eqref{eq:vExt}, i.e., a function $b:[0,T)\to\bR\cup\{\pm\infty \}$ such that
	$$\cC=\{(t,x)\in[0,T)\times\bR: x>b(t) \}$$
	and
	$$\cD=\{(t,x)\in[0,T)\times\bR: x\leq b(t) \}\cup \{T \}\times\bR.$$
\end{proposition}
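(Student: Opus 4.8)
The plan is to reduce the existence of a lower boundary to the monotonicity of $x\mapsto w(t,x)$, where $w$ is the value function of the equivalent problem \eqref{eq:w}. Once we know that $x\mapsto w(t,x)$ is non-decreasing, the time-$t$ slice of the continuation region, $\cC_t:=\{x\in\bR:w(t,x)>0\}$, is automatically an upper half-line, since $w\geq 0$ everywhere (the choice $\tau=0\in\cT_t$ gives payoff $0$, so $w(t,x)\geq 0$). The boundary is then defined by $b(t):=\inf\{x\in\bR:w(t,x)>0\}$, with the conventions $\inf\emptyset=+\infty$ and $b(t)=-\infty$ when $\cC_t=\bR$. This is the natural analogue, for the reformulated problem \eqref{eq:w}, of the sufficient condition for Assumption \ref{ass:StopBound} noted in Section \ref{sect:PbForm}.

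First I would establish monotonicity of $x\mapsto w(t,x)$ by a pathwise comparison argument. Fix $t\in[0,T)$ and $x\leq x'$. Since $X^{t,x}$ and $X^{t,x'}$ solve \eqref{eq:X} driven by the \emph{same} Brownian motion $W$, the monotonicity of the flow in the initial condition (already invoked in Remark \ref{Rmk:Assumpt(i)}, see \cite[Ch.~V, Th.~43.1]{rogers2000diffusions}) gives $X^{t,x}_{t+s}\leq X^{t,x'}_{t+s}$ for all $s\in[0,T-t]$, $\bP$-a.s. As $x\mapsto h(t,x)$ is non-decreasing by assumption, for any fixed $\tau\in\cT_t$ the integrands satisfy $h(t+s,X^{t,x}_{t+s})\leq h(t+s,X^{t,x'}_{t+s})$ for every $s$, whence
$$\int_0^\tau h(t+s,X^{t,x}_{t+s})\,\ud s\leq\int_0^\tau h(t+s,X^{t,x'}_{t+s})\,\ud s,\qquad \bP\text{-a.s.}$$
The same $\tau$ is admissible for both starting points, so taking expectations and then the supremum over $\tau\in\cT_t$ yields $w(t,x)\leq w(t,x')$. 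The point worth stressing is that the comparison is pathwise over a common stopping interval, so it survives even though $h$ may change sign.

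Second I would use the continuity of $w=v-g$ (which follows from Assumption \ref{ass:StandardExt}) to pin down the shape of the two regions. Fix $t\in[0,T)$. For $x<b(t)$ we have $w(t,x)=0$ by definition of the infimum together with monotonicity, so $(t,x)\in\cD$; for $x>b(t)$ there is some $x''\in(b(t),x)$ with $w(t,x'')>0$, hence $w(t,x)\geq w(t,x'')>0$ and $(t,x)\in\cC$. At the threshold itself, continuity gives $w(t,b(t))=\lim_{x\uparrow b(t)}w(t,x)=0$, so $(t,b(t))\in\cD$. This yields precisely the claimed description of $\cC$ and $\cD$ on $[0,T)\times\bR$. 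Finally, on the terminal slice $\cT_T=\{0\}$ forces $w(T,\cdot)\equiv 0$, so $\{T\}\times\bR\subseteq\cD$, completing the characterisation.

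I do not expect a genuine obstacle here: this is the standard \emph{monotone value function $\Rightarrow$ threshold boundary} scheme. The only two points requiring care are (a) carrying out the reward comparison with a single common stopping time and despite the possible sign changes of $h$, which is handled by the pathwise inequality above, and (b) placing the boundary point into $\cD$, which is exactly where continuity of $w$ is needed.
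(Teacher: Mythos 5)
Your proposal is correct and follows essentially the same route as the paper: the paper's proof likewise reduces the claim to the monotonicity of $x\mapsto w(t,x)$ (which it asserts directly from the monotonicity of $x\mapsto h(t,x)$, implicitly via the same monotone-flow comparison you spell out) and then defines the threshold $b(t):=\sup\{x\in\bR: w(t,x)=0\}$, equivalent to your infimum definition under continuity of $w$. Your version merely makes explicit the pathwise comparison with a common stopping time, the nonnegativity $w\geq 0$, and the placement of the boundary point in $\cD$, all of which the paper leaves implicit.
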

\begin{proof}
	Since $x\mapsto h(t,x)$ is non-decreasing for every $t\in[0,T]$, then $x\mapsto w(t,x)$ is non-decreasing for every $t\in[0,T]$. Hence, if $(t,x_1)\in\cD$, then $(t,x_2)\in\cD$ for every $x_2\in(-\infty,x_1]$. Therefore, for $t\in[0,T)$, the function
	$$b(t):=\sup\{x\in\bR: w(t,x)=0 \}$$
	is a lower optimal stopping boundary for the stopping problem \eqref{eq:w} and, thus, also for \eqref{eq:vExt}.
\end{proof}

We can now obtain monotonicity of the optimal stopping boundary also for the more general class of time-inhomogeneous optimal stopping problems in \eqref{eq:vExt}. Recall that $\cS$ denotes the state space of $X$, and that we define $h(t,x):=f(t,x)+\cL g(t,x)$ and $$\cL g(t,x):=\big(\partial_t+\mu(t,x)\partial_x+\tfrac 1 2 (\sigma(x))^2\partial_{xx}\big) g(t,x).$$

\begin{theorem}\label{thm:MonotExt}
	Let Assumption \ref{ass:StandardExt} hold. Moreover, assume that
	\begin{enumerate}[(i)]
		\item $x\mapsto h(t,x)$ is non-decreasing for every $t\in[0,T]$ and $t\mapsto h(t,x)$ is non-increasing for every $x\in\cS$.
		\item $t\mapsto\mu(t,x)$ is non-increasing for every $x\in\cS$.
	\end{enumerate}
	Then, $t\mapsto w(t,x)$ is non-increasing for every $x\in\bR$ and so the optimal stopping boundary $t\mapsto b(t)$ is non-decreasing.
\end{theorem}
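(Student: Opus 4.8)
The plan is to adapt the martingale argument from the proof of Theorem \ref{thm:v_t} to the reformulated value function $w$ in \eqref{eq:w}, where the running reward $h$ now plays the role that the terminal reward $g$ played before. Fix $(t,x)\in[0,T]\times\bR$ and $u\in[0,t]$; the goal is to show $w(t,x)\leq w(u,x)$. First I would invoke assumption (ii) and apply Lemma \ref{lemma:CompPrinc} with $\cO=[0,T]\times\cS$ (so that $\tau_\cO=T-t$) to obtain the pathwise comparison $X^{t,x}_{t+s}\leq X^{u,x}_{u+s}$ for all $s\in[0,T-t]$, $\bP$-a.s. Then, letting $\tau^*=\tau^*_{t,x}$ be optimal for $w(t,x)$ and noting it is admissible (hence sub-optimal) for $w(u,x)$ since $\tau^*\leq T-t\leq T-u$, I would use that $V^{t,x}$ stopped at $\tau^*$ is a martingale while $V^{u,x}$ is a supermartingale (recall \eqref{eq:Vmart}) to write
$$w(t,x)-w(u,x)=V_0^{t,x}-V_0^{u,x}\leq \bE\big[V_{\tau^*}^{t,x}-V_{\tau^*}^{u,x}\big].$$

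The key step is to show the right-hand side is nonpositive. Expanding \eqref{eq:Vmart}, the terminal terms split as follows: since $\tau^*$ is the first exit from $\cC=\{w>0\}$ and $w$ is continuous (by Assumption \ref{ass:StandardExt}), we have $w(t+\tau^*,X^{t,x}_{t+\tau^*})=0$ (it vanishes on $\partial\cC$ and at the horizon), whereas $w(u+\tau^*,X^{u,x}_{u+\tau^*})\geq 0$; hence their contribution to $V_{\tau^*}^{t,x}-V_{\tau^*}^{u,x}$ is nonpositive. For the running-reward integral I would chain the two monotonicity properties in assumption (i): using that $x\mapsto h(\cdot,x)$ is non-decreasing together with the comparison $X^{t,x}_{t+r}\leq X^{u,x}_{u+r}$, and then that $t\mapsto h(t,\cdot)$ is non-increasing together with $t+r\geq u+r$, I obtain $h(t+r,X^{t,x}_{t+r})\leq h(u+r,X^{u,x}_{u+r})$ for every $r$, so that $\int_0^{\tau^*}\big(h(t+r,X^{t,x}_{t+r})-h(u+r,X^{u,x}_{u+r})\big)\ud r\leq 0$. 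Combining the two pieces gives $\bE[V_{\tau^*}^{t,x}-V_{\tau^*}^{u,x}]\leq 0$, hence $w(t,x)\leq w(u,x)$, so $t\mapsto w(t,x)$ is non-increasing.

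The main obstacle, relative to Theorem \ref{thm:v_t}, is the running-reward term, which is genuinely new and is where both halves of assumption (i) must be used simultaneously: the drift monotonicity (ii) alone no longer suffices, as it did in the pure terminal-reward case, because the integrand carries an explicit time dependence through $h$ that must be absorbed by the temporal monotonicity of $h$. Once monotonicity of $t\mapsto w(t,x)$ is established, monotonicity of the boundary follows exactly as in Corollary \ref{cor:Monot}: the boundary exists by Proposition \ref{prop:BoundExist} (thanks to the spatial monotonicity of $h$), and if $(t,x)\in\cC$ then $w(s,x)\geq w(t,x)>0$ for every $s\in[0,t]$, so $(s,x)\in\cC$, giving that $t\mapsto b(t)$ is non-decreasing.
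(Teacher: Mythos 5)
Your proposal is correct and follows essentially the same route as the paper: Lemma \ref{lemma:CompPrinc} with $\cO=[0,T]\times\cS$, the supermartingale/martingale comparison $w(t,x)-w(u,x)\leq \bE[V^{t,x}_{\tau^*}-V^{u,x}_{\tau^*}]$, dropping the terminal terms since $w(t+\tau^*,X^{t,x}_{t+\tau^*})=0\leq w(u+\tau^*,X^{u,x}_{u+\tau^*})$, and controlling the running-reward integral by combining the temporal and spatial monotonicity of $h$ with the pathwise ordering (your chained inequality is exactly the paper's telescoping split through the intermediate term $h(u+s,X^{t,x}_{t+s})$). The concluding step via Proposition \ref{prop:BoundExist} and the argument of Corollary \ref{cor:Monot} also matches the paper.
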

\begin{proof}
		Let $(t,x)\in[0,T]\times\bR$ and $u\in[0,t]$. By assumption (ii), we can apply Lemma \ref{lemma:CompPrinc} with $\cO=[0,T]\times\cS$ and obtain that
	\begin{equation}\label{eq:ComparisonFull2}
	\bP\Big(X^{t,x}_{t+s}\leq X^{u,x}_{u+s}, \quad \forall\: s\in[0,T-t]\Big)=1.
	\end{equation}
	By the (super)martingale property \eqref{eq:Vmart} of $V$ and since $\tau^*=\tau^*_{t,x}$ is optimal for $w(t,x)$ and sub-optimal for $w(u,x)$, we have that
	\begin{align*}
	w(t,x)-w(u,x)&=V^{t,x}_0-V^{u,x}_0\leq\bE\Big[V^{t,x}_{\tau^*}-V^{u,x}_{\tau^*}\Big] \\
	&=\bE\bigg[\int_0^{\tau^*}\Big\{h(t+s,X^{t,x}_{t+s})-h(u+s,X^{u,x}_{u+s}) \Big\}\ud s\bigg]\\
	&\hspace{13pt}+\bE\Big[w(t+\tau^*,X^{t,x}_{t+\tau^*})-w(u+\tau^*,X^{u,x}_{u+\tau^*})\Big]\\
	&\leq \bE\bigg[\int_0^{\tau^*}\Big\{h(t+s,X^{t,x}_{t+s})-h(u+s,X^{t,x}_{t+s}) \Big\}\ud s\bigg]\\
	&\hspace{13pt}+\bE\bigg[\int_0^{\tau^*}\Big\{h(u+s,X^{t,x}_{t+s})-h(u+s,X^{u,x}_{u+s}) \Big\}\ud s\bigg]\leq 0,
	\end{align*}
	where the last inequality follows from assumption (i) and result \eqref{eq:ComparisonFull2}. Hence, $t\mapsto w(t,x)$ is non-increasing for every $x\in\bR$. The monotonicity of $t\mapsto b(t)$ (whose existence is guaranteed by Proposition \ref{prop:BoundExist}) is, thus, obtained by the same arguments as in the proof of Corollary \ref{cor:Monot}.
\end{proof}
\begin{remark}
	Notice that the proof of \cite[Proposition 4.4]{jacka1992finite}, which provides monotonicity of the optimal stopping boundary, holds only if the underlying process is time-homogeneous. Our Theorem \ref{thm:MonotExt} extends that result to time-inhomogeneous optimal stopping problems, under the additional assumption that $t\mapsto\mu(t,x)$ is non-increasing for every $x\in\bR$.
\end{remark}

\section{Optimal stopping under incomplete information}\label{sect:OptStoppPartInfo}

Our methods are particularly suited to study optimal stopping problems under incomplete information. To this purpose, in this section, we provide some background material on this class of problems and in Section \ref{sect:Examples} we will look into a specific example.

The common feature of these stopping problems is a random variable whose outcome is unknown to the optimiser and which affects the drift of the underlying process and/or the payoff function. The literature is vast and diverse in this field and we cite, among others, \cite{shiryaev1967two}, \cite{decamps2005investment}, \cite{ekstrom2011optimal}, \cite{ekstrom2016optimal}, \cite{ekstrom2020optimal}, \cite{ekstrom2019american}, \cite{gapeev2012pricing}, \cite{glover2020optimally} \cite{henderson2020executive}. We focus, in particular, on models as in \cite{ekstrom2020optimal} and \cite{glover2020optimally} where a random variable affects the drift of the underlying process and, in a Bayesian formulation of the problem, only the prior distribution of the random variable is known to the optimiser. As time evolves, the information obtained from observing the underlying process is used to update the initial beliefs about the unknown random variable.

Let $(\Omega,\cF,\bP)$ be a complete probability space on which it is defined a standard Brownian motion $W:=(W_t)_{t\geq0}$ and a real-valued random variable $Y$ with (prior) probability distribution $\nu$. Let $T\in(0,\infty)$ be a finite time horizon. The underlying process $X$ evolves according to
\begin{equation}\label{eq:Xprior}
\ud X_t = h(Y)\ud t+\ud W_t, \qquad X_0=x\in\bR,
\end{equation}
where $h$ is a measurable function such that $\int_\bR |h(y)|\nu(\ud y)<\infty$. Given a reward function $g:[0,T]\times\bR\to\bR$, we can then define the stopping problem
\begin{equation}\label{eq:V}
V := \sup_{\tau\in[0,T]}\bE\Big[g(\tau,X_\tau)\Big],
\end{equation}
where $\tau$ is a stopping time with respect to $(\cF^X_t)_{t\in[0,T]}$, the augmented filtration generated by $X$.

It is well-known from filtering theory (see, e.g., \cite[Proposition 3.16]{bain2009fundamentals}) that
$\bE[h(Y)|\cF^X_t]=\bE[h(Y)|X_t]=:f(t,X_t)$ where
\begin{equation}\label{eq:f}
f(t,x):=\frac{\int_\bR h(y)\e^{xy-y^2t/2}\nu(\ud y)}{\int_\bR\e^{xy-y^2t/2}\nu(\ud y)}.
\end{equation}
Moreover,
\begin{equation}\label{eq:Xf}
 \ud X_s = f(s,X_{s})\ud s+\ud B_s,
\end{equation}
where $B_t:=\int_0^t \big(h(Y)-\bE[h(Y)|\cF^X_s]\big)\ud s+W_t$ is an $\cF^X$-Brownian motion (see, e.g., \cite[Proposition 2.30]{bain2009fundamentals}) known as the ``innovation process''. By means of \eqref{eq:Xf}, we can embed the original stopping problem \eqref{eq:V} into a Markovian framework and define the value function
\begin{equation}\label{eq:vIncInf}
v(t,x):=\sup_{\tau\in[0,T-t]}\bE\Big[g(t+\tau,X^{t,x}_{t+\tau})\Big],
\end{equation}
where $\tau$ is a stopping time with respect to the augmented filtration generated by $B$, and $X^{t,x}$ follows the dynamics in \eqref{eq:Xf} with $X_t^{t,x}=x\in\bR$. The optimal stopping problem \eqref{eq:vIncInf} is now in the form of \eqref{eq:vExt} and we may, thus, apply Theorem \ref{thm:MonotExt} to obtain monotonicity of its stopping boundary. The form of the drift coefficient $f(t,x)$ defined in \eqref{eq:f} highly depends on the prior distribution $\nu$ and in Section \ref{sect:Examples} we will look at a simple example where the monotonicity required in assumption (ii) of Theorem \ref{thm:MonotExt} holds.

\section{Examples}\label{sect:Examples}

In this section we show some simple examples of time-inhomogeneous optimal stopping problems where our results apply. We consider different underlying time-inhomogeneous diffusions of the form \eqref{eq:X} that give rise to corresponding optimal stopping problems of the form \eqref{eq:v}.
We then determine under which conditions on the data of the problems we can apply our theorems and obtain monotonicity of the stopping boundary. We fix a complete probability space $(\Omega,\cF,\bP)$ with a filtration $\bF:=(\cF_t)_{t\geq 0}$ satisfying the usual conditions. We let $W:=(W_t)_{t\geq0}$ be a standard Brownian motion which is $\bF$-adapted and $T\in(0,\infty)$ be a finite time horizon. For simplicity, in the following examples we also let Assumption \ref{ass:Standard} and Assumption \ref{ass:StopBound} hold.

\subsection{Applications of Theorem \ref{thm:v_t}}
We begin by considering two simple time-inhomogeneous diffusions and the corresponding optimal stopping problems of the form \eqref{eq:v}. For the reward function of these two examples we only assume that $x\mapsto g(x)$ is non-decreasing, as required in Theorem \ref{thm:v_t}.

First, for $(t,x)\in[0,T]\times\bR$, let $X=X^{t,x}$ be a Brownian motion with time-dependent drift, described by
\begin{equation}\label{eq:BmDrift}
X_{t+s}=x+\int_0^s\mu(t+r)\ud r+\sigma W_s, \qquad s\in[0,T-t]. 
\end{equation}
If $t\mapsto\mu(t)$ is non-increasing, then we can apply Theorem \ref{thm:v_t} and obtain that $t\mapsto v(t,x)$ is non-increasing for every $x\in\bR$. Thus, Corollary \ref{cor:Monot} guarantees that the corresponding optimal stopping boundary $t\mapsto b(t)$ is non-decreasing.

Now, for $(t,x)\in[0,T]\times(0,\infty)$, let $X=X^{t,x}$ be a geometric Brownian motion with time-dependent drift, described by
\begin{equation}
X_{t+s}=x+\int_0^s\gamma(t+r)X_{t+r}\ud r+\int_0^s\sigma X_{t+r} \ud W_r, \qquad s\in[0,T-t]. 
\end{equation}
Its state space is $\cS=(0,\infty)$ and its drift is $\mu(t,x):=x\gamma(t)$. If $t\mapsto\gamma(t)$ is non-increasing, we can apply Theorem \ref{thm:v_t} and obtain that $t\mapsto v(t,x)$ is non-increasing for every $x\in\bR$. Thus, Corollary \ref{cor:Monot} guarantees that the corresponding optimal stopping boundary $t\mapsto b(t)$ is non-decreasing.

Notice that the assumptions on the monotonicity of the drifts for the previous two examples could be weakened by considering assumption (ii) of Theorem \ref{thm:v_tTauB} instead. However, this comes with the cost of adding assumption (iii) of Theorem \ref{thm:v_tTauB} (which is implied by convexity of $x\mapsto X^{t,x}$ and of $x\mapsto g(x)$, recall Remark \ref{Rmk:Assumpt(i)}).

\subsection{Applications of Theorem \ref{thm:v_tTauB}}
For some underlying time-inhomogeneous diffusion it happens that assumption (ii) of Theorem \ref{thm:v_t} does not hold. In these cases it is useful to consider Theorem \ref{thm:v_tTauB}, which weakens this assumption. We now show an example of this situation. This application is even more suited to our techniques as no specific assumption on the drift of the underlying process is required in order to apply Theorem \ref{thm:v_tTauB}. This is the case when the underlying time-inhomogeneous diffusion is a Brownian bridge and an example of this optimal stopping problem, where $g(x)=\e^x$, is studied in \cite{de2020optimal}. For our example we assume that $x\mapsto g(x)$ is non-decreasing (as required by assumption (i) of Theorem \ref{thm:v_tTauB}) and, for simplicity, that it is convex (to satisfy assumption (iii) of Theorem \ref{thm:v_tTauB}, but convexity is not strictly necessary, as explained in Remark \ref{Rmk:Assumpt(i)}).

For $(t,x)\in[0,T]\times\bR$, let $X=X^{t,x}$ be a Brownian bridge pinned at $0$ at time $T\in(t,\infty)$, whose dynamics are described by
\begin{equation}\label{eq:BrownBridge}
X_{t+s}=x-\int_0^s \frac{X_{t+r}}{T-t-r}\ud r+\sigma W_s, \qquad s\in[0,T-t), 
\end{equation}
with $X_T=0$. It is easy to check that the unique strong solution to this SDE is given by
$$X_{t+s}=(1-t-s)\bigg(\frac{x}{1-t}+\int_0^s \frac{1}{1-t-r}\ud W_r\bigg), \qquad s\in[0,T-t).$$
Hence, $x\mapsto X^{t,x}$ is linear (and thus convex) and together with convexity of $x\mapsto g(x)$ we have that assumption (iii) of Theorem \ref{thm:v_tTauB} is satisfied (recall Remark \ref{Rmk:Assumpt(i)}). In order to apply Theorem \ref{thm:v_tTauB}, we are only left to check that assumption (ii) is satisfied. The drift coefficient of the SDE \eqref{eq:BrownBridge} is $\mu(t,x):=-x/(T-t)$. Thus, we have that
$$\cM:=\{(t,x)\in[0,T)\times\bR: \mu(t,x)<0 \}=\{(t,x)\in[0,T)\times\bR: x>0 \}$$
and
$$\mu(t,x)=-\frac{x}{T-t}\leq - \frac{x}{T-t+\eps}=\mu(t-\eps,x), \qquad (t,x)\in \cM, \quad  \eps\in(0,t).$$
Therefore, also assumption (ii) of Theorem \ref{thm:v_tTauB} holds.
Hence, we can apply Theorem \ref{thm:v_tTauB} and obtain that $t\mapsto v(t,x)$ is non-increasing for every $x\in\bR$. Thus, Corollary \ref{cor:Monot} guarantees that the corresponding optimal stopping boundary $t\mapsto b(t)$ is non-decreasing.

Further examples can be shown to satisfy the assumptions of Theorem \ref{thm:v_tTauB}. For instance, optimal stopping problems where the underlying diffusion follows the Vasicek model (i.e., an Ornstein-Ulhenbeck process) or the Cox-Ingersoll-Ross model can be studied in their time-inhomogeneous version, i.e., when the long-term mean is allowed to be time-dependent.

\subsection{An application to optimal stopping under incomplete information}
To conclude, we consider an example of an optimal stopping problem under incomplete information as in Section \ref{sect:OptStoppPartInfo}, for which we want to apply Theorem \ref{thm:v_t}. Assume that $X$ follows the dynamics as in \eqref{eq:Xprior} with $h(y)=y$, i.e.,
$$\ud X_t = Y\ud t+\ud W_t,$$
where $Y$ is a real-valued random variable with prior distribution $\nu$ which has finite first moment. Then, by filtering theory, we have that
$$\ud X_t=f(t,X_t)\ud t+\ud B_t,$$
where
$$f(t,x):=\frac{\int_\bR y\e^{xy-y^2t/2}\nu(\ud y)}{\int_\bR\e^{xy-y^2t/2}\nu(\ud y)},$$
and $B$ is a Brownian motion with respect to the filtration generated by $X$. We can then consider corresponding optimal stopping problems as in \eqref{eq:vIncInf}, where the reward function $x\mapsto g(x)$ is non-decreasing (so that it satisfies assumption (i) of Theorem \ref{thm:v_t}). We now want to look at an example where also assumption (ii) of Theorem \ref{thm:v_t} is satisfied, i.e., where the drift $t\mapsto f(t,x)$ is non-increasing, so that we can obtain monotonicity of the optimal stopping boundary. Let us consider a simple example of a two-point prior distribution $\nu=p\delta_l+(1-p)\delta_r$, where $\delta_x$ is the Dirac delta centred in $x\in\bR$, $-\infty<l<r<\infty$ and $p\in(0,1)$. Then,
$$f(t,x)=\frac{p l \e^{lx-l^2t/2}+(1-p)r\e^{rx-r^2t/2}}{p \e^{lx-l^2t/2}+(1-p)\e^{rx-r^2t/2}}.$$
For assumption (ii) of Theorem to hold, we would like to obtain $\partial_t f\leq 0$. After some algebra, we have that
$$\partial_t f(t,x)=-\frac{1}{2}\frac{p(1-p)\e^{(l+r)x-(l^2+r^2)t/2}(r-l)^2(r+l)}{\big[ p \e^{lx-l^2t/2}+(1-p)\e^{rx-r^2t/2}\big]^2}.$$
If $r\geq -l$, then $\partial_t f(t,x)\leq 0$ for every $(t,x)\in[0,T]\times\bR$. Therefore, we can apply Theorem \ref{thm:v_t}, and obtain that $t\mapsto v(t,x)$ is non-increasing for every $x\in\bR$. Thus, Corollary \ref{cor:Monot} guarantees monotonicity of the corresponding stopping boundary also for this example of optimal stopping problem under incomplete information. Notice that this is only one example of prior distribution and different priors may be investigated. For instance, Theorem \ref{thm:v_t} can also be applied to the optimal stopping of a Brownian bridge with unknown pinning point whose prior is normal $\mu\sim\cN(m,\gamma^2)$ (see, \cite[Section 5.1]{ekstrom2020optimal}).

\section*{Acknowledgments}
I wish to thank S.~Villeneuve for encouraging me to pursue this research idea, and T.~De Angelis and E.~Ekstr\"{o}m for the enlightening discussions.

\bibliography{bibfile}{}

\begin{thebibliography}{10}

\bibitem{bain2009fundamentals}
A.~Bain and D.~Crisan.
\newblock {\em Fundamentals of stochastic filtering}.
\newblock Springer, 2009.

\bibitem{chen2007mathematical}
X.~Chen and J.~Chadam.
\newblock A mathematical analysis of the optimal exercise boundary for
  {A}merican put options.
\newblock {\em SIAM Journal on Mathematical Analysis}, 38(5):1613--1641, 2007.

\bibitem{d2020discounted}
B.~D'Auria, E.~Garc{\'\i}a-Portugu{\'e}s, and A.~Guada.
\newblock Discounted optimal stopping of a {B}rownian bridge, with application
  to {A}merican options under pinning.
\newblock {\em Mathematics}, 8(7):1159, 2020.

\bibitem{de2015note}
T.~De~Angelis.
\newblock A note on the continuity of free-boundaries in finite-horizon optimal
  stopping problems for one-dimensional diffusions.
\newblock {\em SIAM Journal on Control and Optimization}, 53(1):167--184, 2015.

\bibitem{de2022stopping}
T.~De~Angelis.
\newblock Stopping spikes, continuation bays and other features of optimal
  stopping with finite-time horizon.
\newblock {\em Electronic Journal of Probability}, 27:1--41, 2022.

\bibitem{de2020optimal}
T.~De~Angelis and A.~Milazzo.
\newblock Optimal stopping for the exponential of a {B}rownian bridge.
\newblock {\em Journal of Applied Probability}, 57(1):361--384, 2020.

\bibitem{de2020global}
T.~De~Angelis and G.~Peskir.
\newblock Global ${C}^{1}$ regularity of the value function in optimal stopping
  problems.
\newblock {\em The Annals of Applied Probability}, 30(3):1007--1031, 2020.

\bibitem{decamps2005investment}
J.-P. D{\'e}camps, T.~Mariotti, and S.~Villeneuve.
\newblock Investment timing under incomplete information.
\newblock {\em Mathematics of Operations Research}, 30(2):472--500, 2005.

\bibitem{ekstrom2004convexity}
E.~Ekstr{\"o}m.
\newblock Convexity of the optimal stopping boundary for the {A}merican put
  option.
\newblock {\em Journal of {M}athematical {A}nalysis and {A}pplications},
  299(1):147--156, 2004.

\bibitem{ekstrom2011optimal}
E.~Ekstr{\"o}m and B.~Lu.
\newblock Optimal selling of an asset under incomplete information.
\newblock {\em International Journal of Stochastic Analysis}, 2011, 2011.

\bibitem{ekstrom2016optimal}
E.~Ekstrom and J.~Vaicenavicius.
\newblock Optimal liquidation of an asset under drift uncertainty.
\newblock {\em SIAM Journal on Financial Mathematics}, 7(1):357--381, 2016.

\bibitem{ekstrom2020optimal}
E.~Ekstr{\"o}m and J.~Vaicenavicius.
\newblock Optimal stopping of a {B}rownian bridge with an unknown pinning
  point.
\newblock {\em Stochastic Processes and their Applications}, 130(2):806--823,
  2020.

\bibitem{ekstrom2019american}
E.~Ekstr{\"o}m and M.~Vannest{\aa}l.
\newblock American options and incomplete information.
\newblock {\em International Journal of Theoretical and Applied Finance},
  22(06):1950035, 2019.

\bibitem{ekstrom2009optimal}
E.~Ekstr{\"o}m and H.~Wanntorp.
\newblock Optimal stopping of a {B}rownian bridge.
\newblock {\em Journal of Applied Probability}, 46(1):170--180, 2009.

\bibitem{follmer1972}
H.~F{\"o}llmer.
\newblock Optimal stopping of constrained {Brownian} motion.
\newblock {\em Journal of Applied Probability}, 9(3):557--571, 1972.

\bibitem{gapeev2012pricing}
P.~V. Gapeev.
\newblock Pricing of perpetual {A}merican options in a model with partial
  information.
\newblock {\em International Journal of Theoretical and Applied Finance},
  15(01):1250010, 2012.

\bibitem{glover2020optimally}
K.~Glover.
\newblock Optimally stopping a {B}rownian bridge with an unknown pinning time:
  a {B}ayesian approach.
\newblock {\em Stochastic Processes and their Applications}, 2020.

\bibitem{henderson2020executive}
V.~Henderson, K.~Klad{\'\i}vko, M.~Monoyios, and C.~Reisinger.
\newblock Executive stock option exercise with full and partial information on
  a drift change point.
\newblock {\em SIAM Journal on Financial Mathematics}, 11(4):1007--1062, 2020.

\bibitem{hobson2021shape}
D.~Hobson.
\newblock The shape of the value function under {P}oisson optimal stopping.
\newblock {\em Stochastic Processes and their Applications}, 133:229--246,
  2021.

\bibitem{jacka1991optimal}
S.~D. Jacka.
\newblock Optimal stopping and the {A}merican put.
\newblock {\em Mathematical Finance}, 1(2):1--14, 1991.

\bibitem{jacka1992finite}
S.~D. Jacka and J.~R. Lynn.
\newblock Finite-horizon optimal stopping, obstacle problems and the shape of
  the continuation region.
\newblock {\em Stochastics: An International Journal of Probability and
  Stochastic Processes}, 39(1):25--42, 1992.

\bibitem{jaillet1990variational}
P.~Jaillet, D.~Lamberton, and B.~Lapeyre.
\newblock Variational inequalities and the pricing of {A}merican options.
\newblock {\em Acta Applicandae Mathematica}, 21(3):263--289, 1990.

\bibitem{karatzas1998methods}
I.~Karatzas and S.~E. Shreve.
\newblock {\em Methods of Mathematical Finance}.
\newblock Springer, 1998.

\bibitem{laurence2009regularity}
P.~Laurence and S.~Salsa.
\newblock Regularity of the free boundary of an {A}merican option on several
  assets.
\newblock {\em Communications on Pure and Applied Mathematics: A Journal Issued
  by the Courant Institute of Mathematical Sciences}, 62(7):969--994, 2009.

\bibitem{oshima2006optimal}
Y.~Oshima.
\newblock On an optimal stopping problem of time inhomogeneous diffusion
  processes.
\newblock {\em SIAM Journal on Control and Optimization}, 45(2):565--579, 2006.

\bibitem{peskir2019continuity}
G.~Peskir.
\newblock Continuity of the optimal stopping boundary for two-dimensional
  diffusions.
\newblock {\em The Annals of Applied Probability}, 29(1):505--530, 2019.

\bibitem{peskir2006optimal}
G.~Peskir and A.~Shiryaev.
\newblock {\em Optimal stopping and free-boundary problems}.
\newblock Springer, 2006.

\bibitem{rogers2000diffusions}
L.~C.~G. Rogers and D.~Williams.
\newblock {\em Diffusions, Markov processes and martingales: Volume 2, It{\^o}
  calculus}.
\newblock Cambridge university press, 2000.

\bibitem{shepp1969}
L.~A. Shepp.
\newblock Explicit solutions to some problems of optimal stopping.
\newblock {\em The Annals of Mathematical Statistics}, 40(3):993, 1969.

\bibitem{shiryaev1967two}
A.~N. Shiryaev.
\newblock Two problems of sequential analysis.
\newblock {\em Cybernetics}, 3(2):63--69, 1967.

\bibitem{villeneuve1999exercise}
S.~Villeneuve.
\newblock Exercise regions of {A}merican options on several assets.
\newblock {\em Finance and Stochastics}, 3(3):295--322, 1999.

\bibitem{yang2014refined}
Y.~Yang.
\newblock Refined solutions of time inhomogeneous optimal stopping problem and
  zero-sum game via {D}irichlet form.
\newblock {\em Probability and Mathematical Statistics}, 34(2):253--271, 2014.

\end{thebibliography}
\bibliographystyle{abbrv} 

\end{document}